\newif\ifmakepreprint
\makepreprintfalse
\makepreprinttrue 

\ifmakepreprint
	\documentclass[onefignum,onetabnum]{siamart251104}
\else
	\documentclass[review,onefignum,onetabnum]{siamart251104}
\fi
\usepackage{accents}
\usepackage[toc,page]{appendix}
\usepackage{enumerate}
\usepackage{amssymb}

\usepackage{graphicx}
\usepackage{subcaption}
\usepackage[margin=1in]{geometry}


\newsiamthm{lmm}{Lemma}
\newsiamthm{thrm}{Theorem}
\newsiamthm{rmrk}{remark}
\newsiamthm{dfntn}{Definition}
\newsiamthm{system}{System}
\newsiamthm{assumption}{Assumption}
\newsiamthm{crllr}{Corollary}
\newsiamthm{algbls}{Algorithm}

\newcommand{\bv}[1]{{\mathbf #1}} 
\newcommand{\bt}[1]{{\mathbf{#1}}}
\newcommand{\myrank}[1]{{\texttt{rank}\left( {#1}\right)}} 

\usepackage{algorithm}
\usepackage{algpseudocode}


\newcommand{\funSpace}[1]{\mathcal{#1}} 


    \newcommand{\fom}[1]{{{#1}}} 
    \newcommand{\com}[1]{{{#1}_{\rm{c}}}} 
    
        \newcommand{\prol}[1]{{{#1}_{\rm{p}}}} 
        \newcommand{\thi}[1]{{{#1}_{\rm{t}}}}	
    
\newcommand{\rom}[1]{{{#1}_{\rm{r}}}}    
\newcommand{\Nr}{{N_r}}

\newcommand{\R}{\mathbb{R}}

\newcommand{\ffun}{{\bv{f}}}

\newcommand{\bform}{{b}}
\newcommand{\tL}{{\fs{\varphi}}}

\newcommand{\tAlg}{{\bv{p}}}

\newcommand{\fs}[1]{{{#1}}} 
\newcommand{\wei}{{\omega}} 
\newcommand{\xp}{{\xi}} 

\newcommand{\acro}[1]{{\texttt{#1}}}  
\newcommand{\mydim}[1]{{\textrm{dim}#1}}  

\newcommand{\vwei}{\bv{u}} 
\newcommand{\Ap}{{\bt{A}}}
\newcommand{\vAp}{\bv{a}}

\newcommand{\fopt}{{\eta}}

\newcommand{\dreg}{{{\bv{d}}}}

\newcommand{\Cp}{{\bt{C}}}
   

\newcommand{\xrf}{{\rom{\bar{x}}}}

\newcommand{\mydiag}{\rm{diag}}    

\usepackage{tikz}
\usetikzlibrary{calc,positioning,shapes,arrows,decorations.shapes} 
\tikzstyle{rect}=[
   rectangle, draw=black, font=\small\sffamily\bfseries, inner sep=9pt]

\newtheorem{prblm}{Problem} 

\begin{document}

\title{Reducing Training Complexity in Empirical Quadrature-Based Model Reduction via Structured Compression}

\author{Bj\"orn Liljegren-Sailer\thanks{Radon Institute for Computational and Applied Mathematics (RICAM), Austrian Academy
of Sciences, Altenberger Str. 69, 4040 Linz, Austria
  (\email{Corresponding author: bsailer@ricam.oeaw.ac.at}).}
}

\maketitle
\date{\today}

\begin{abstract}
Model order reduction seeks to approximate large-scale dynamical systems by lower-dimensional reduced models. For linear systems, a small reduced dimension directly translates into low computational cost, ensuring online efficiency. This property does not generally hold for nonlinear systems, where an additional approximation of nonlinear terms --known as complexity reduction-- is required. To achieve online efficiency, empirical quadrature and cell-based empirical cubature are among the most effective complexity reduction techniques. However, existing offline training algorithms can be prohibitively expensive because they operate on raw snapshot data of all nonlinear integrands associated with the reduced model.
In this paper, we introduce a preprocessing approach based on a specific structured compression of the training data. Its key feature is that it scales only with the number of collected snapshots, rather than additionally with the reduced model dimension. Overall, this yields roughly an order-of-magnitude reduction in offline computational cost and memory requirements, thereby enabling the application of the complexity reduction methods to larger-scale problems. Accuracy is preserved, as indicated by our error analysis and demonstrated through numerical examples.
\end{abstract}


%
\begin{AMS}
\end{AMS}

%
\begin{keywords}
	model reduction; nonlinear model reduction; hyper-reduction; complexity reduction; structure-preserving; empirical quadrature; empirical cubature
\end{keywords}

Let a  \textit{truth} nonlinearity $\ffun: \R^N \to \R^N$ of high dimension $N$ be given. We define its projected counterpart by
\begin{align*}
	\rom{\ffun}: \R^\Nr \to \R^\Nr, \hspace{1cm} \rom{\ffun}: \rom{\bv{x}} \mapsto \bt W^T \ffun(\bt V \rom{\bv{x}}),
\end{align*}
where $\bt W, \bt V \in \R^{N,\Nr}$  induce a projection onto a low dimension $\Nr \ll N$.
Although the nonlinearity $\rom{\ffun}$ is low-dimensional its evaluation cost is --in the general nonlinear case-- not independent of the truth dimension $N$. This is a typical scenario, specifically when nonlinear partial differential equations are approximated using projection-based model order reduction methods. In order to avoid the high evaluation costs during time-critical online computations, a complexity-reduction (also known as hyper-reduction) is performed. In an offline training phase a complexity-reduced nonlinearity
\begin{align*}
\com{\ffun}: \R^\Nr \to \R^\Nr, \hspace{1cm } \text{with} \hspace{0.3cm} \com{\ffun} \approx \rom{\ffun}
\end{align*}
is constructed as a surrogate for $\rom{\ffun}$. Online efficiency is achieved by imposing some sort of sparsity on the approximation, which avoids the evaluation costs to scale with the full dimension $N$.
One can distinguish two classes of complexity-reduction methods.

In this work, we focus on those that adopt the \textit{project-then-approximate} paradigm, and our goal is to enhance the offline training algorithms associated with these methods.
The empirical quadrature \cite{art:equad-eff-yano,art:hyperreduction-fem-cubature,art:lp-cub-yano} and the cell-based empirical cubature \cite{art:comred-ecsw, art:cub-efficient-integration-cubature, inbook:ecsw-compBottle} (also known as ECSW \cite{art:chan-EntropROM}) are the principal representatives of project-then-approximate techniques. As the name suggests, the training phase of these methods seeks to approximate the projected nonlinearity directly. This approach typically yields a complexity reduction with high fidelity and robust online performance. Moreover, an additional advantage of this class of methods is that fundamental structural properties --such as dissipative relations or a conservation law interpretation-- can be preserved through simple variational arguments in many relevant settings.

The second class of complexity-reduction methods, referred to as \textit{approx{\-}imate{\-}-then{\-}-project} methods, generally exhibits less robust online performance, particularly in applications where the aforementioned structural properties play a critical role~\cite{art:morParamHamHesthaven,art:lilsailer-nlfow,art-equad-DG-chan}. Nonetheless, they seem to be applied more in practice, specifically the Empirical Interpolation Method and its discrete variant~\cite{art:deim-state-space-err,art:deim-canon-str}.
The primary advantage lies in the availability of highly efficient offline training procedures that operate exclusively on compressed training data.

To the best of the author's knowledge, no comparably well-scaled algorithm for training project-then-approximate methods has been proposed to date. In this work, we aim to address this gap by introducing a preprocessing step for these methods that leverages the structure inherent in the respective training problems and which reduces the computational costs significantly.

\subsection*{Organization of the paper} 
The structure of the paper is as follows. The spatial discretization and the model order reduction setting for this paper is established in Section~\ref{sec:motivSetting}. The complexity reduction ansatz itself is presented in Section~\ref{sec:ansatzEquadCub}. Section~\ref{sec:offline-training-standard} reviews existing training methods for complexity reduction. Our proposed preprocessing approach based on a structured compression is then introduced. The general data compression strategy is derived in Section~\ref{sec:preprocessing-ansatz}, while Section~\ref{sec:separatedData} develops related structured representations tailored to our specific training problems. Section~\ref{sec:error-analysis} provides an error analysis, including an a posteriori bound. Finally, Section~\ref{sec:numericPerformance} presents numerical performance tests demonstrating the efficiency of our proposed data compression approach and the reliability of the a posteriori bound.

\subsection*{Notation}
Matrices, vectors, and scalars are indicated by capital boldfaced, small boldfaced, and normal letters, respectively, whereby vector expressions
always refer to column vectors. The norm $\|.\|$ refers to the spectral norm for matrices and to the Euclidean norm for vectors, respectively. 
We write $\|.\|_{\rm{F}}$ for the Frobenius norm, i.e., $\| \bt{Q} \|_{\rm{F}}^2 = { \sum_{i=1}^M \sum_j^N  |q_{ij}|^2}$ for any $\bt{Q}\in \R^{M,N}$.
The symbol '$\circ$' denotes the the Hadamard product, which is the elementwise multiplication operator between two vectors of the same dimension.
Given a vector $\bv{v}\in \R^N$, we write $\mydiag(\bv{v}) \in \R^{N,N}$ for the diagonal matrix with diagonal entries given by the entries of $\bv{v}$, and $[\bv{v}]_i$ for its $i$-th entry of the vector. Moreover, the cardinality of any finite set $I_c$ is denoted by $|I_c|$.

\section{Setting for the paper}\label{sec:motivSetting}



\subsection{Model hierarchy  in the algebraic setting}\label{subsec:intro-hierarchy}

Consider an input-output map $\bv{u} \mapsto \fom{\bv{y}}$ described in terms of the dynamical system
\begin{align*}
\frac{d}{dt} \fom{\bv{x}}(t) =  \ffun(\fom{\bv{x}}(t)) + \bt{A} \bv{x}(t) + \fom{\bt{B}} \bv{u}(t) , \qquad \fom{\bv{y}}(t) = \fom{\bt{C}} \fom{\bv{x}}(t)
\end{align*}
for time $t \in [0,T]$ and system matrices $\bt{A}\in \R^{N,N}$, $\fom{\bt{B}}\in \R^{N,p}$, $\fom{\bt{C}}\in \R^{q,N}$. We assume  $p,q \ll N$, which motivates the use of model reduction. The $\ffun: \R^N \to \R^N$ describes the truth nonlinearity and the latter system is denoted as the full order model (\acro{FOM}), which can be closed by appropriate initial conditions.

In the nonlinear case, the model reduction consists of two steps. Firstly, the model order reduction is performed, using reduction bases $\bt W, \bt V \in \R^{N,\Nr}$ and $\Nr \ll N$. Assuming the usual bi-orthogonality assumption $\bt{W}^T\bt{V}= \bt{I}$ holds, the reduced order model (\acro{ROM}) is then given as the following projection of the \acro{FOM},
\begin{align*}
\frac{d}{dt} \rom{\bv{x}}(t) =   \rom{\ffun}(\rom{\bv{x}}(t)) + \rom{\bt{A}} \rom{\bv{x}}(t) + \rom{\bt{B}} \bv{u}(t) , \qquad \rom{\bv{y}}(t) = \rom{\bt{C}} \rom{\bv{x}}(t),
\end{align*}
with reduced matrices $\rom{\bt{A}}= \bt{W}^T\bt{A} \bt{V}$,  $\rom{\bt{B}} = \bt{W}^T\bt{B}$, $\rom{\bt{C}} = \bt{C} \bt{V}$.
The second step, termed complexity reduction, involves approximating the projected nonlineartity $\rom{\ffun}: \rom{\bv{x}} \mapsto \bt W^T \ffun(\bt V \rom{\bv{x}})$ by a complexity-reduced nonlinearity $\com{\ffun}: \R^\Nr \to \R^\Nr$. Replacing the projected nonlinearity in the \acro{ROM} with $\com{\ffun}$ yields the complexity-reduced model (\acro{CROM}). In this paper, we focus primarily on the complexity reduction step, and, more specifically, on its training phase.

\subsection{Projected nonlinearity in the function space setting}\label{subsec:intro-nonlinearity}
The underlying function space perspective is also relevant for our approach. We assume that the \acro{FOM} arises from the discretization of a partial differential equation posed on a bounded spatial domain $\Omega \subset \R^d$, with $d \in \{1,2,3\}$. For simplicity, we restrict ourselves to a standard finite element setting and assume that $\ffun$ in the \acro{FOM} corresponds to a scalar-valued function $f: \Omega \to \R$.

Let $\funSpace{W}$ and $\funSpace{V}$ denote the test and trial spaces of the finite element discretization, respectively, both of dimension $N$. The basis of $\funSpace{W}$ is given by $\tL^1,\ldots,\tL^N: \Omega \to \R$.
We denote the \acro{FOM} state by $\fs{x}: [0,T] \to \funSpace{V}$ for $t \in [0,T]$, and assume that the truth nonlinearity ${\bt{f}}: \R^N \to \R^N$ can be written as
\begin{align*}
[\ffun(\bv{x}(t))]_n = \int_\Omega f(\fs{x}(t)) \, \tL^n \, d\xp, \qquad \text{for } n = 1,\ldots,N,
\end{align*}
where $\bv{x}$ denotes the coordinate representation of $\fs{x}$ in the \acro{FOM} basis.
The \acro{ROM} is obtained by projecting the \acro{FOM} onto reduced spaces $\rom{\funSpace{W}} \subset \funSpace{W}$ and $\rom{\funSpace{V}} \subset \funSpace{V}$, with $\mydim(\rom{\funSpace{W}}) = \mydim(\rom{\funSpace{V}}) = \Nr$. Let the \acro{ROM} state $\rom{\fs{x}}: [0,T] \to \rom{\funSpace{V}}$ approximate $\fs{x}$. The \acro{ROM} nonlinearity $\rom{\bt{f}}: \R^\Nr \to \R^\Nr$ is then defined by
\begin{align*}
[\rom{\ffun}(\rom{\bv{x}}(t))]_n = \int_\Omega f(\rom{\fs{x}}(t)) \, \rom{\tL}^n \, d\xp, \qquad \text{for } n = 1,\ldots,\Nr,
\end{align*}
where $\rom{\tL}^1,\ldots,\rom{\tL}^\Nr$ form a basis of $\rom{\funSpace{W}}$, and $\rom{\bv{x}}$ is the coordinate representation of $\rom{\fs{x}}$ in the \acro{ROM} basis.
\begin{rmrk} \label{rem:coordRepresDyn}
The continuous setting described here relates to the algebraic framework in Section~\ref{subsec:intro-hierarchy} as follows: the columns of the reduction basis $\bt{W}$ are the coordinate representations of the test functions $\rom{\tL}^1,\dots,\rom{\tL}^\Nr$ in the \acro{FOM} space $\funSpace{W}$.  Likewise, the columns of $\bt{V}$ represent the coordinates of $\rom{\funSpace{V}}$.
\end{rmrk}

\section{Complexity reduction ansatz} \label{sec:ansatzEquadCub}
A complexity reduction method constructs a cheaper-{\-}to-{\-}evaluate surrogate for the projected nonlinearity $\rom{\ffun}$. In the case of the project-then-approximate methods this is done based on a \textit{truth sum representation} of the nonlinearity. We shortly introduce the empirical quadrature and the empirical cubature setting in Sections~\ref{subsec:empQuad}-\ref{subsec:empCub}. As the two approaches share many similarities, we then go over to a more generic setting in Section~\ref{sec:genericSetting} to formalize the respective training problem in the algebraic setting.

\subsection{Empirical quadrature} \label{subsec:empQuad}
In empirical quadrature, the truth representation is defined by a general quadrature rule with points ${\xp}_m$ and weights $\tilde{\wei}_m$ for $m = 1,\ldots,M_{\rm quad}$. We assume that the \acro{FOM} nonlinearity is evaluated using this truth quadrature, i.e.,
\begin{align*}
[\ffun(\bv{x}(t))]_n = \tilde{\bform}\!^{\,\rm{quad}}(f(\fs{x}(t)),\tL^n) := \sum_{m=1}^{M_{\rm{quad}}} \tilde{\wei}_m {f}(\fs{x}(t,{\xp}_m))\tL^n({\xp}_m)
\end{align*}
for $n = 1,\ldots, N$.
By construction the $\tilde{\bform}\!^{\,\rm{quad}}$ is a bilinear form. Here and in the following, we use the same symbol for the state $\fs{x} : [0,T] \to \mathcal{V}$ and its real-valued counterpart $\fs{x} : [0,T] \times \Omega \to \R^d$, to keep the notation concise. The projected nonlinearity allows for a similar representation, given as
\begin{align*}
[\rom{\ffun}(\rom{\bv{x}}(t))]_n = \tilde{\bform}^{\rm{quad}}({f}(\rom{\fs{x}}(t)), \rom{\tL}\!^n) =  \sum_{m=1}^{M_{\rm{quad}}} \tilde{\wei}_m {f}(\rom{\fs{x}}(t,{\xp}_m))\rom{\tL}\!^n({\xp}_m)
\end{align*}
for  $n = 1,\ldots, \Nr$. It is important to note that the truth nonlinearity typically exhibits sparsity, meaning that most summands in the sum representation of $[\ffun(\cdot)]_n$ are zero.
However, no sparsity can generally be expected for $\rom{\ffun}$, so its evaluation typically scales with $M_{\rm quad} \geq N$.

As a remedy, empirical quadrature can be used. The idea of the latter is  to replace the truth bilinear form $\tilde{\bform}^{\rm{quad}}$ by a complexity-reduced version $\com{\tilde{\bform}}\!^{\rm{quad}}$.
The resulting complexity-reduced nonlinearity $\com{\ffun}$ then reads
\begin{align*}
[\com{\ffun}(\rom{\bv{x}}(t))]_n = \com{\tilde{\bform}}\!^{\rm{quad}}({f}(\rom{\fs{x}}(t)), \rom{\tL}\!^n) :=  \sum_{m \in I_c} {\wei}_m {f}(\rom{\fs{x}}(t,{\xp}_m) \rom{\tL}\!^n({\xp}_m)
\end{align*}
for $n= 1,\ldots \Nr$.
The subset $I_c \subset \{1,\ldots, {M_{\rm quad}} \}$ and weights $\wei_m \geq 0$, $m\in I_c$, are subject to an offline training. Following \cite{art:hyperreduction-fem-cubature}, we consider the following training problem:
Given snapshots $\fs{x}^1, \ldots, \fs{x}^K \subset \funSpace{V}$ of the \acro{FOM} solution and $\com{M} \ll M_{\rm{quad}}$, and choosing and a regularization (reg.), solve
{\small
\begin{align*} 
	\min_{\substack{I_c, \,|I_c| \leq \com{M}\\
			            \wei_m \geq 0, \,m \in I_c } }
\sum_{\substack{k = 1,\ldots,K\\ n= 1,\ldots,\Nr}}			            
			             \left|\left| \left(\sum_{m \in I_c} {\wei}_m {f}({\fs{x}}^k({\xp}_m))\rom{\tL}\!^n({\xp}_m) \right)
			             - \tilde{\bform}^{\rm{quad}}({f}({\fs{x}}^k), \rom{\tL}\!^n)\right|\right|^2
\quad \text{(+reg.)}
\end{align*}
}
\begin{rmrk}
Note that snapshots ${\fs{x}}^k \in \funSpace{V}$ of the \acro{FOM} are used in the training problem. It might seem more natural to use snapshots restricted to the reduced space $\rom{\funSpace{V}}$ instead. However, the evaluation with projected training data requires additional computational costs with very minor impact on the result of the training, cf.~\cite{art:hyperreduction-fem-cubature,art:comred-ecsw}. The same can be said and done for other project-then-approximate complexity reduction methods.
 \end{rmrk}

\subsection{Cell-based empirical cubature} \label{subsec:empCub}
In the standard finite element setting, the domain $\Omega$ is partitioned into non-overlapping cells $\Omega_1,\ldots,\Omega_{M_{\rm cells}}$. The (cell-based) empirical cubature approach exploits a sum representation of $\ffun$ induced by this partitioning. Thus, it holds
\begin{align*}
[\ffun(\bv{x}(t))]_n = \tilde{\bform}^{\rm cells}(f(\fs{x}(t)),\tL^n) := \sum_{m=1}^{M_{\rm cells}} \int_{\Omega_m} f(\fs{x}(t)) \, \tL^n \, d\xp,
\end{align*}
for $n = 1,\ldots,N$. Most summands vanish due to the small local support of the test functions.
The projected nonlinearity admits a similar representation, given by
\begin{align*}
[\rom{\ffun}(\rom{\bv{x}}(t))]_n = \tilde{\bform}^{\rm cells}(f(\rom{\fs{x}}(t)),\rom{\tL}^n) = \sum_{m=1}^{M_{\rm cells}} \int_{\Omega_m} f(\rom{\fs{x}}(t)) \, \rom{\tL}^n \, d\xp,
\end{align*}
for $n = 1,\ldots,\Nr$. Here, generally no summands vanish, so the evaluation cost scales with $M_{\rm cells}$ rather than $\Nr$.

Complexity reduction is achieved by replacing $\tilde{\bform}^{\rm cells}$ with a sparse bilinear form ${\tilde{\bform}}_c\!^{\rm cells}$. The resulting approximation $\com{\ffun}$ then reads
\begin{align*}
[\com{\ffun}(\rom{\bv{x}}(t))]_n = \com{\tilde{\bform}}\!^{\rm{cells}}({f}(\rom{\fs{x}}(t)), \rom{\tL}\!^n) :=
\sum_{m\in I_c} \wei_m \int\limits_{\Omega_m} {f}(\rom{\fs{x}}(t)) \rom{\tL}\!^n \,d\xp,
\end{align*}
for $n = 1,\ldots, \Nr$.
The subset $I_c \subset \{1,\ldots, M_{\rm cells} \}$ together with the weights $\wei_m \geq 0$, $m \in I_c$, are determined by the following training problem; cf.~\cite{art:comred-ecsw}:

Given snapshots $\fs{x}^1, \ldots, \fs{x}^K \subset \funSpace{V}$ of the \acro{FOM} solution and $\com{M} \ll M_{\rm cells}$ and choosing regularization (reg), solve
{\small
\begin{align*} 
	\min_{\substack{I_c, \,|I_c| \leq \com{M}\\
			            \wei_m \geq 0, \,m \in I_c } }
\sum_{\substack{k = 1,\ldots,K\\ n= 1,\ldots,\Nr}}			            
			             \left|\left| \left(\sum_{m\in I_c} \wei_m \int\limits_{\Omega_m} {f}({\fs{x}}^k) \rom{\tL}\!^n \,d\xp \right)
			             - \tilde{\bform}^{\rm{cells}}({f}({\fs{x}}^k), \rom{\tL}\!^n)\right|\right|^2
\quad \text{(+reg.)}
\end{align*}
}

\subsection{Generic and algebraic setting} \label{sec:genericSetting}
In slight generalization to the empirical quadrature and cubature setting, let us assume that a truth sum representation of projected nonlinearity $\rom{\ffun}$ is given by
\begin{align*}
[\rom{\ffun}(\xrf)]_n= \tilde{\bform}\!^{\,\rm{*}}(f(\xrf),\rom{\tL}\!^n) := \sum_{m=1}^{M_{}} \tilde{\wei}_m  \beta_{\rm{*}}^m(f(\xrf),\rom{\tL}\!^n), \hspace{1cm} \text{for } n = 1,\ldots, \Nr
\end{align*}
and $\xrf \in \rom{\funSpace{V}}$. We additionally pose the assumption that each of the localized bilinear terms $\beta_{\rm{*}}^m$ has a low computational cost. This then motivates a complexity reduction based on an approximation of $\tilde{\bform}\!^{\,\rm{*}}$ with a sparse sum. The respective complexity-reduced nonlinearity $\com{\ffun}$ becomes
\begin{align*}
[\com{\ffun}(\xrf)]_n :={\tilde{\bform}}\!^{\,\rm{*}}_c(f(\xrf),\rom{\tL}\!^n) 
:= \sum_{m\in \com{I}} {\wei}_m  \beta_{\rm{*}}^m(f(\xrf),\rom{\tL}\!^n), \hspace{1cm} \text{for } n = 1,\ldots, \Nr.
\end{align*}
The subset $I_c \subset \{1,\ldots , M  \}$ and weights $\wei_m \geq 0$ of the complexity reduction are subject to the following training problem:

Given snapshots $\fs{x}^1, \ldots, \fs{x}^K \subset \funSpace{V}$ and $\com{M}\ll M_{}$, solve
{\small
\begin{align} 
\begin{split}\label{eq:opt-cubquad}
	\min_{\substack{I_c, \,|I_c| \leq \com{M}\\
			            \wei_m \geq 0, \,m \in I_c } }
\sum_{\substack{k = 1,\ldots,K\\ n= 1,\ldots,\Nr}}			            
			             \left|\left| \left(\sum_{m \in I_c} {\wei}_m \beta_{\rm{*}}^m({f}({\fs{x}}^k),\rom{\tL}\!^n) \right)
			             - \tilde{\bform}\!^{\,\rm{*}}({f}({\fs{x}}^k), \rom{\tL}\!^n)\right|\right|^2 \\
			             + \left( \left(\sum_{m \in I_c} {\wei}_m \beta_{\rm{*}}^m(1,1) \right) - \tilde{\bform\!^{\,\rm{*}}}(1,1) \right)^2 .
\end{split}
\end{align}
}
This formulation generalizes the training problems defined above for empirical quadrature and cubature, but the regularization term is now fixed to penalize the approximation error in reconstructing $\tilde{\bform}^{\rm *}(1,1)$. As noted in \cite{art:hyperreduction-fem-cubature}, this regularization prevents a specific type of ill-posedness associated with trivial solutions. Its presence will also play a role in our error analysis (cf.~\Cref{the:CompressedOptFun}). The training problem \eqref{eq:opt-cubquad} represents a non-negative least-squares program with cardinality constraints \cite{art:nguyen2019non,proc:blumensath2008gradient}. In the following, we rewrite this problem in an algebraic setting.

Let the data vectors $\vAp^{k,n}$ for $k = 1,\ldots,K$ and $n = 1,\ldots,\Nr$, and the solution manifold matrix $\tilde{\Ap}$ be defined by
{\small
\begin{align} \label{eq:Ab-data}
	\vAp^{k,n} = 
\begin{bmatrix}
\beta_{\rm{*}}^1\left( f(\bv{x}^k), \rom{\tL}\!^n \right) \\
\vdots \\
\beta_{\rm{*}}^M\left( f(\bv{x}^k), \rom{\tL}\!^n \right)
\end{bmatrix} \in \R^{M},
\hspace{0.4cm}
\text{ and } \hspace{0.4cm}
	 	\tilde{\Ap} = \begin{pmatrix}
		(\vAp^{1,1})^T \\
		\vdots \\
		(\vAp^{1,\Nr})^T \\
		(\vAp^{2,1})^T \\
		\vdots \\
		(\vAp^{2,\Nr})^T \\
		\vdots \\
		(\vAp^{K,1})^T \\
		\vdots \\
		(\vAp^{K,\Nr})^T
	\end{pmatrix}  \in \R^{K\Nr , M} .
\end{align}
}
Further, define the truth weight vector $\tilde{\vwei}=[\tilde{\wei}_1,\ldots,\tilde{\wei}_M]^T$ and the vector $\dreg =[\beta_{\rm{*}}^1(1,1),\ldots,\beta_{\rm{*}}^M(1,1)]^T$ related to the regularization term. Then \eqref{eq:opt-cubquad} is equivalent to the following problem.

\begin{prblm}\label{prob:unc-CrTrain}
\begin{align*}
\text{Solve} \quad & \min_{\substack{I_c \subset \{1,\ldots,M\},\, |I_c| \leq \com{M} \\ \vwei=[\wei_1,\ldots,\wei_M]^T}} 
\| \tilde{\Ap}(\vwei - \tilde{\vwei}) \|^2 + (\dreg^T(\vwei - \tilde{\vwei}))^2 \\
& \text{s.t.} \quad \wei_m \geq 0 \text{ for } m \in I_c,\quad \wei_{\bar{m}} = 0 \text{ for } \bar{m} \notin I_c.
\end{align*}
\end{prblm}
\begin{rmrk}\label{rem:standard-least-squares}
Note that the cost function of \Cref{prob:unc-CrTrain} can be rewritten in the standard least-square form $F: \vwei \mapsto \| \mathcal{A} \vwei -\bv{g} \|^2$, defining  $\mathcal{A} =  [\tilde{\Ap}^T, \dreg]^T$ and  $\bv{g} = \mathcal{A} \tilde{\vwei}$. 
\end{rmrk}

\section{Greedy offline training from literature} \label{sec:offline-training-standard}
The training phase of complexity-reduction methods inherently involves a combinatorial component that scales with the problem dimension. In \Cref{prob:unc-CrTrain}, this corresponds to selecting the subset $I_c$. Solving this problem to global optimality is generally infeasible and unnecessary. Consequently, greedy strategies are employed in practice. We adopt the Orthogonal Matching Pursuit \cite{art:nguyen2019non,proc:blumensath2008gradient}, which is well-suited to the structure of our training problem. The core idea is to alternate between expanding the index set $I_c$ through a greedy search and computing optimal weights for the fixed set. A possible implementation is sketched in \Cref{alg:OrthMatchPur}.

\begin{algorithm}[H]
\caption{Orthogonal Matching Pursuit for offline training}
\label{alg:OrthMatchPur}
\begin{algorithmic}[1]
\State \textbf{Input:} Training data $(\mathcal{A}, \bv{g})$, dimension set $\com{M}$
\State \textbf{Output:} Sparse weights $\vwei^{\com{M}}$, residual $\sqrt{F(\vwei^{\com{M}})}$
\State Define $F(\vwei) = \|\mathcal{A}\vwei - \bv{g}\|^2$, with gradient $\nabla F = 2\mathcal{A}^T(\mathcal{A}\vwei-\bv g)$
\State Initialize $I_0 = \emptyset$, $\vwei^0 = \bv{0}$
\For{$k = 1:\com{M}$}
    \State Define set of candidates $I_c = \{1,\ldots,M\}\setminus I_{k-1}$
    \State Find $j_{\max} = \arg\max_{j\in I_c} -[\nabla F(\vwei^{k-1})]_j$
    \State Set $I_k = I_{k-1}\cup\{j_{\max}\}$
\State $\begin{aligned}
\text{Solve } &  \min_{ \vwei=[\wei_1,\ldots,\wei_M]^T } F(\vwei) \\
& \text{s.t. }  \wei_i \ge 0 \ for i \in I_k,\; \wei_j = 0 \; for j \notin I_k
\end{aligned}$
\EndFor
\State Compute residual $\sqrt{F(\vwei^{\com{M}})}$
\end{algorithmic}
\end{algorithm}

Even with the adoption of a greedy procedure, offline training based on \Cref{prob:unc-CrTrain} remains highly computationally demanding, as it operates directly on raw snapshot data. This data typically exhibits near-linear dependencies among equations and the equation number scales with $K\Nr$.

The linear dependencies can cause convergence issues for certain greedy algorithms. While such issues can be mitigated using standard compression techniques \cite{art:globConv-onTheFly-hyperred}, the computational cost poses a more critical challenge, as noted by several authors and shown in the numerical section. In \cite{art:hyperreduction-fem-cubature}, compression of the training data is proposed as a remedy. However, since this compression is applied to the raw solution manifold, the approach still scales with the full complexity of \Cref{prob:unc-CrTrain}. To make it feasible, partitioned \acro{SVD} --an inexact compression technique-- is suggested. Similarly, \cite{hernandez2024cecm} advocates the use of randomized \acro{SVD}.
Other works rely on massive parallelization \cite{art:equad-eff-yano,art:grimberg2021mesh}, enabling the treatment of large-scale problems given sufficient computational resources. While effective, these approaches may still become prohibitively expensive as dimensions grow.

In contrast, we pursue a different strategy: reducing the complexity of the training problem itself through structured data compression. In principle, our approach can be combined with parallelization or other large-scale techniques mentioned above. However, the focus of this paper is exclusively on the preprocessing strategy.

\section{Preprocessing by structured data compression} \label{sec:preprocessing-ansatz}
The basic idea of our preprocessing approach is to approximate the solution manifold matrix $\tilde{\Ap}$ in \eqref{eq:Ab-data}.
Note that $\tilde{\Ap}$ consists of raw snapshot data, and its row dimension scales with $\Nr K$, i.e., with the \acro{ROM} dimension.
Since the matrix is dense, it can become very large in practical settings, potentially too large to fit into memory.

To address this and reduce the associated computational overhead, we propose preprocessing the training data through compression, resulting in a data-compressed training problem.
More specifically, we construct a structured low-rank approximation of $\tilde{\Ap}$ based on a suitable factorization of the data.
The compression is applied exclusively along the snapshot dimension $K$.
The advantage of this structured approach is that it eliminates operations scaling with $K \Nr$, i.e., the row dimension of $\tilde{\Ap}$, and instead achieves scaling with $K$ only.
The structured compression problem underlying our approach is outlined in Section~\ref{subsec:preprocessing-ansatz}. Further required technical results and an abstract algorithmic implementation of the proposed preprocessing itself are provided in Section~\ref{subsec:solve-preprocess}.

\subsection{The preprocessing problem}\label{subsec:preprocessing-ansatz}
When $K$ is compressed to $\thi{K} \ll K$ in our approach, this yields a compressed solution manifold matrix $\thi{\Ap}$ and a respective compressed cost function of the form
\begin{align} \label{eq:foptThin}
	\thi{\fopt}(\vwei) = \| \thi{\Ap}( \vwei - \tilde{\vwei}) \|, \hspace{1cm} \text{with } \thi{\Ap}\in \R^{\thi{K}\Nr , M}.
\end{align}
The compressed training problem is then obtained replacing the exact cost function in \Cref{prob:unc-CrTrain} by its compressed counterpart.
\begin{prblm} \label{prob:con-CrTrain}
Let $\thi{\fopt}$ be as in \eqref{eq:foptThin} and let $\tilde{\vwei}$ denote the truth weights.
\begin{align*} 
\text{Solve \quad}	 &\min_{ \substack{I_c\subset \{1,\ldots M\}, \, |I_c | \leq \com{M} \\  \vwei=[\wei_1,\ldots,\wei_M]^T } }  \hspace{0.2cm} \thi{\fopt}(\vwei)^2 + (\dreg^T( \vwei -  \tilde{\vwei}))^2
 \\[-0.1cm]
	& \hspace{0.3cm} {\small \smash{ \text{s.t.}  \quad \wei_m \geq 0 \, \text{ for } m \in I_c, \quad \wei_{\bar{m}}  = 0 \text{ for } \bar{m}  \notin I_c  }},
\end{align*}
\end{prblm}
In what follows, we motivate and derive the compression step itself.
We start by noting that $\tilde{\Ap} \in \mathbb{R}^{K\Nr , M}$ is obtained from three-dimensional data and represents one matricization of a tensor in $\mathbb{R}^{K , \Nr , M}$. For structured analysis, however, a different matricization is preferable, defined as
{\small
\begin{align}
\begin{split}\label{eq:C-data}
\tilde{\Cp} &=
\begin{pmatrix}
\beta_{\rm *}^1\!\left(f(\bv{x}^1), \rom{\tL}^1\right) & \ldots & \beta_{\rm *}^1\!\left(f(\bv{x}^K), \rom{\tL}^1\right) \\
\beta_{\rm *}^2\!\left(f(\bv{x}^1), \rom{\tL}^1\right) & \ldots & \beta_{\rm *}^2\!\left(f(\bv{x}^K), \rom{\tL}^1\right) \\
\vdots & \ddots & \vdots \\
\beta_{\rm *}^M\!\left(f(\bv{x}^1), \rom{\tL}^1\right) & \ldots & \beta_{\rm *}^M\!\left(f(\bv{x}^K), \rom{\tL}^1\right) \\
\vdots & \ddots & \vdots \\
\beta_{\rm *}^M\!\left(f(\bv{x}^1), \rom{\tL}^{\Nr}\right) & \ldots & \beta_{\rm *}^M\!\left(f(\bv{x}^K), \rom{\tL}^{\Nr}\right)
\end{pmatrix} \\
&=
\begin{pmatrix}
\vAp^{1,1} & \ldots & \vAp^{K,1} \\
\vdots & \ddots & \vdots \\
\vAp^{1,\Nr} & \ldots & \vAp^{K,\Nr}
\end{pmatrix}.
\end{split}
\end{align}
}
By construction, $\tilde{\Cp}$ contains the same entries as $\tilde{\Ap}$. To fully exploit the underlying structure, we employ a factorization $\tilde{\Cp} = {\bt{N}} \hat{\bt{G}}$, where $\bt{N}$ is a high-dimensional but sparse matrix encoding the constraints inherent in the data. As will be shown, the optimization can be reduced to a problem of the dimension of $\hat{\bt{G}}$, which is significantly lower than the dimension of $\tilde{\Ap}$. The derivation of the aforementioned factorization of $\tilde{\Cp}$ depends on the specific training problem; details are provided in Section~\ref{sec:separatedData}. The structured training problem of our proposed preprocessing can now be stated.

\begin{prblm}\label{prob:cpca-C}
Let $\tilde{\Cp}$ from \eqref{eq:C-data} be given in the structured form $\tilde{\Cp} = {\bt{N}} \hat{\bt{G}}$ with $\bt{N} \in \R^{\Nr M, M_J}$ of full column rank and $\hat{\bt{G}}\in \R^{M_J,K}$. Let $\thi{K} \ll rank(\tilde{\Cp})$.
\begin{align*} 
\text{Solve}\quad \min_{ \prol{\bt{G}}, \myrank{\prol{\bt{G}}}  \leq \thi{K}}  \left \| \tilde{\Cp} - \bt{N} \prol{\bt{G}} \right\|_{\rm{F}}.
\end{align*}
\end{prblm}

\begin{rmrk}\label{rem:best-rank-k-equivalence}

Since the Frobenius norm of a matrix is invariant under any rearrangement of its entries, the following two best rank-$k$ approximation problems minimize the same cost function:
\begin{align*}
\min_{\prol{\Cp},\, \myrank{\prol{\Cp}} \leq \thi{K}} \left\| \tilde{\Cp} - \prol{\Cp} \right\|_{\rm F}, 
\qquad \text{and} \qquad
\min_{\prol{\Ap},\, \myrank{\prol{\Ap}} \leq \thi{K}} \left\| \tilde{\Ap} - \prol{\Ap} \right\|_{\rm F},
\end{align*}
given $\tilde{\Ap}$ and $\tilde{\Cp}$ as in \eqref{eq:Ab-data} and \eqref{eq:C-data}, respectively.
Similarly, \Cref{prob:cpca-C} shares the same cost function, but its admissible set is a subset of the best rank-$k$ approximation problem for $\tilde{\Cp}$, which is of lower dimension. This reduction in dimension is the key to the efficiency of our approach.
\end{rmrk}

\subsection{Solving the compression problem} \label{subsec:solve-preprocess}
Structured problems of the form given in \Cref{prob:cpca-C} have been studied in the context of constrained principal component analysis \cite{art:Takane1991, art:Takane2001}. Among other results, connections to the standard best rank-$k$ approximation problem have been established. The following lemma presents one such relation. For completeness, we also provide a brief proof sketch.

\begin{lemma} \label{lem:svd-sol-cpca}
Let ${\bt{N}}$ have the QR factorization $\bt{N} = \bt{Q} \bt{R}$.
Then $\bt{G}_p^*$ solves \Cref{prob:cpca-C}, if and only if, $\mathcal{G}^* =  \bt{R} \bt{G}_p^*$ solves
\begin{align}\label{eq:lem:svd-sol-cpca}
\min_{ {\check{\mathcal{G}}}, \rm{rank}(\check{\mathcal{G}}) \leq \thi{K}}  \left \| \bt{R}  \hat{\bt{G}} - \check{\mathcal{G}} \right\|_{\rm{F}}
\end{align}
\end{lemma}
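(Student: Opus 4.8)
The plan is to reduce the structured objective to an ordinary best rank-$k$ approximation cost by combining the factorization hypothesis with the orthogonality in the QR decomposition. First I would use the assumption $\tilde{\Cp} = \bt{N}\hat{\bt{G}}$ to rewrite the objective of \Cref{prob:cpca-C} as
\begin{align*}
\left\| \tilde{\Cp} - \bt{N}\prol{\bt{G}} \right\|_{\rm F} = \left\| \bt{N}\bigl(\hat{\bt{G}} - \prol{\bt{G}}\bigr) \right\|_{\rm F}.
\end{align*}
Writing $\bt{N} = \bt{Q}\bt{R}$, the columns of $\bt{Q}$ are orthonormal, and the key elementary fact I would invoke is that left-multiplication by a matrix with orthonormal columns leaves the Frobenius norm unchanged, i.e.\ $\|\bt{Q}\bt{M}\|_{\rm F} = \|\bt{M}\|_{\rm F}$ for any conformable $\bt{M}$; this follows from $\bt{Q}^T\bt{Q} = \bt{I}$ and the trace characterization $\|\bt{M}\|_{\rm F}^2 = \mathrm{tr}(\bt{M}^T\bt{M})$.

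Applying this identity with $\bt{M} = \bt{R}(\hat{\bt{G}} - \prol{\bt{G}})$, so that $\bt{Q}\bt{M} = \bt{N}(\hat{\bt{G}} - \prol{\bt{G}})$, yields
\begin{align*}
\left\| \bt{N}\bigl(\hat{\bt{G}} - \prol{\bt{G}}\bigr) \right\|_{\rm F} = \left\| \bt{R}\bigl(\hat{\bt{G}} - \prol{\bt{G}}\bigr) \right\|_{\rm F} = \left\| \bt{R}\hat{\bt{G}} - \bt{R}\prol{\bt{G}} \right\|_{\rm F}.
\end{align*}
Setting $\check{\mathcal{G}} := \bt{R}\prol{\bt{G}}$, the right-hand side is precisely the objective of \eqref{eq:lem:svd-sol-cpca} evaluated at $\check{\mathcal{G}}$. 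Thus the two cost functions coincide under the change of variable $\prol{\bt{G}} \mapsto \bt{R}\prol{\bt{G}}$.

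It then remains to verify that this change of variable maps the feasible set of \Cref{prob:cpca-C} bijectively onto that of \eqref{eq:lem:svd-sol-cpca}. Here I would invoke the full-column-rank assumption on $\bt{N}$, which forces the triangular factor $\bt{R}$ to be square and invertible. Consequently $\prol{\bt{G}} \mapsto \bt{R}\prol{\bt{G}}$ is a linear bijection on matrices of the appropriate shape, and since multiplication by an invertible matrix preserves rank, $\myrank{\bt{R}\prol{\bt{G}}} = \myrank{\prol{\bt{G}}}$. Hence the constraint $\myrank{\prol{\bt{G}}} \leq \thi{K}$ is equivalent to $\myrank{\check{\mathcal{G}}} \leq \thi{K}$. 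Combining the objective identity with this bijection of feasible sets gives the stated equivalence in both directions: $\prol{\bt{G}}^*$ minimizes \Cref{prob:cpca-C} if and only if $\mathcal{G}^* = \bt{R}\prol{\bt{G}}^*$ minimizes \eqref{eq:lem:svd-sol-cpca}.

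The argument is essentially bookkeeping, and the only two places where the hypotheses are genuinely used are the Frobenius-norm invariance (requiring orthonormality of the columns of $\bt{Q}$) and the rank preservation (requiring invertibility of $\bt{R}$, i.e.\ the full-column-rank assumption on $\bt{N}$). I therefore expect no analytic difficulty; the one point to handle carefully is presentational, namely stating the change of variable as a genuine bijection so that optimality transfers cleanly in both directions of the ``if and only if,'' rather than only showing equality of the attained minima.
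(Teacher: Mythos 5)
Your proof is correct and follows essentially the same route as the paper's own proof sketch: rewrite the objective via $\tilde{\Cp} = \bt{N}\hat{\bt{G}}$, invoke Frobenius-norm invariance under left-multiplication by $\bt{Q}$, and pass to the variable $\check{\mathcal{G}} = \bt{R}\prol{\bt{G}}$. The one improvement is that you make explicit what the paper glosses over with ``in particular, this holds for the minimizers'': the transfer of optimality in \emph{both} directions requires that $\prol{\bt{G}} \mapsto \bt{R}\prol{\bt{G}}$ be a rank-preserving bijection between the two feasible sets, which is exactly where the full-column-rank assumption on $\bt{N}$ (hence invertibility of $\bt{R}$) enters.
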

\begin{proof}
Using $\tilde{\Cp} = {\bt{N}} \hat{\bt{G}}$, and exploiting the orthogonality of $\bt{Q}$ together with the invariance of the Frobenius norm under orthogonal transformations, we obtain
\begin{align*}
 \left \| \tilde{\Cp} - \bt{N} \prol{\bt{G}} \right\|_{\rm{F}} =  \left \| \bt{Q} (  \bt{R} \hat{\bt{G}} -  \bt{R} \prol{\bt{G}} ) \right\|_{\rm{F}} = \left \|   \bt{R} \hat{\bt{G}} - {\bt{R}} \prol{\bt{G}}  \right\|_{\rm{F}} = \left \|   \bt{R} \hat{\bt{G}} - \check{\mathcal{G}}  \right\|_{\rm{F}}
\end{align*}
for any choice of $\check{\mathcal{G}} =  \bt{R} \prol{\bt{G}}$. In particular, this holds for the minimizers $\bt{G}_p^*$ and $\mathcal{G}^* = \bt{R} \bt{G}_p^*$ of the two minimization problems, from which the claim follows.
\end{proof}
The problem \eqref{eq:lem:svd-sol-cpca} is a standard best rank-$k$ approximation, which can be solved via singular value decomposition (\acro{SVD}), see~\cite{book:GolubVanLoan}. Consequently, together with Lemma~\ref{lem:svd-sol-cpca}, the following two corollaries can be deduced.
\begin{crllr}\label{cor:truncSVD}
Assume the conditions of Lemma~\ref{lem:svd-sol-cpca} hold. Let $\sigma_1 \ge \dots \ge \sigma_{\thi{K}} \ge \ldots \ge 0$ denote the singular values of $\bt{R}\bt{G}_p^*$ in descending order. Let, further, $\sigma_{\thi{K}} > \sigma_{\thi{K}+1}$. 

Then the solution to \eqref{eq:lem:svd-sol-cpca} is the truncated \acro{SVD} $\mathcal{G}^* = \bt{U}_1^L \boldsymbol{\Sigma}_1 \bt{U}_1^R$, where $\boldsymbol{\Sigma}_1 = \mydiag(\sigma_1,\dots,\sigma_{\thi{K}})$, and $\bt{U}_1^L \in \R^{M_J , \thi{K}}$, $\bt{U}_1^R \in \R^{\thi{K} , K}$ contain the first $\thi{K}$ left and right singular vectors of $\bt{R}\bt{G}_p^*$, respectively.
\end{crllr}
The following corollary shows that the results of the standard best rank-$k$ approximation extend naturally to our structured setting, provided the QR factorization of $\bt{N}$ is available. In particular, it yields a compression bound in terms of the singular values of $\bt{R}\bt{G}_p^*$.
\begin{crllr}\label{cor:err-truncSVD}
Assume the conditions of Corollary~\ref{cor:truncSVD}. Then the solution of  \Cref{prob:cpca-C} is given by $\bt{G}^*_p = \bt{R}^{-1} \bt{U}_1^L \bold{\Sigma}_1 \bt{U}_1^R$, and it fulfills the compression bound
\begin{align*}
	\left \| \tilde{\Cp} - \bt{N} {\bt{G}}_p^* \right\|_{\rm{F}} = \kappa, \qquad \text{with } \kappa = \sqrt{\sum_{i> \thi{K}} \sigma_i^2 }.
\end{align*}
\end{crllr}
Before proceeding, we would like to highlight two aspects required for the efficient implementation of the latter results. Firstly, the QR factorization of $\bt{N}$ and the inverse $\bt{R}^{-1}$ can be determined very efficiently in our setting, exploiting the specific structure of the training problem, cf.~Section~\ref{sec:separatedData}. 
Secondly, the low-rank approximations are identified with low-order matrices in an algorithmic implementation. For example, the solution $\bt{G}_p^*\in\R^{M_J,K}$ of the problem given as in Lemma~\ref{lem:svd-sol-cpca} is identified with the low-order matrix 
\begin{align}\label{eq:Gt-trunc}
\bt{G}_t^* =  \bt{R}^{-1} \bt{U}_1^L \bold{\Sigma}_1  \in\R^{M_J,\thi{K}}, \hspace{0.5cm} \text{since } \bt{G}_p^* = \bt{G}_t^*  \bt{U}_1^R,
\end{align}
choosing the basis $\bt{U}_1^R$ in the identification. Similarly $\bt{N} \bt{G}_p^*$, which is the low-rank approximation of $\tilde{\Cp}$ we solve for in \Cref{prob:cpca-C}, is identified with the low-order matrix $\thi{\Cp} := \bt{N} \bt{G}_t^* \in \R^{\Nr M, \thi{K}}$. The compressed solution manifold matrix $\thi{\Ap}$ defining \Cref{prob:con-CrTrain} is then obtained by an appropriate restructuring of $\thi{\Cp}$.
Our preprocessing procedure can now be summarized in \Cref{alg:CPCA}.


\begin{algorithm}[H]
\caption{Structured preprocessing for offline training}
\label{alg:CPCA}
\begin{algorithmic}[1]
\State \textbf{Input:} Structured snapshot data $\tilde{\Cp} = \bt{N}\hat{\bt{G}}$, compression dimension $\thi{K}$
\State \textbf{Output:} Compressed matrix $\thi{\Ap}\in \R^{\thi{K}\Nr,M}$, compression error $\kappa$
\State Determine\textsuperscript{*} QR factorization $\bt{N} = \bt{Q}\bt{R}$
\State\label{step-svd:alg:CPCA}Compute truncated SVD of $\bt{R} \hat{\bt{G}}$:
$\begin{aligned}
\mathcal{G}^* &= \bt{U}_1^L \bold{\Sigma}_1 \bt{U}_1^R,\; \hspace{0.2cm} (\text{with error } \kappa)
\end{aligned}$
\State Compute\textsuperscript{*} $\bt{G}^*_t = \bt{R}^{-1}\bt{U}_1^L$
\State Form $\thi{\Ap}$ from $\thi{\Cp} := \bt{N}\bt{G}_t$ by permutation
\end{algorithmic}
\vspace{-0.8\baselineskip}
\noindent\begin{minipage}{0.8\linewidth}
{\makeatletter
{\renewcommand{\thefootnote}{*}\footnotetext{\textsuperscript{*}exploiting the favorable matrix structure}}
\makeatother}
\end{minipage}
\end{algorithm}


\begin{rmrk}\label{rem:comput-complex-cpca}
Since the matrix $\bt{N}$ and its QR factors exhibit a highly favorable structure (cf.\ next section), the computational cost of \Cref{alg:CPCA} is dominated by the \acro{SVD} in step~\ref{step-svd:alg:CPCA}, which is performed on a matrix of significantly lower dimension than $\tilde{\Ap}$. The memory-critical assembly of $\tilde{\Ap}$ (or $\tilde{\Cp}$) is entirely avoided, as we operate directly on the factorized form of $\tilde{\Cp}$.
\end{rmrk}

\section{Structured representation of the training data} \label{sec:separatedData}
It remains to derive a structure-revealing factorization of $\tilde{\Cp}$ as postulated in~\Cref{prob:cpca-C} and to analyze the structure of the resulting factors.
In Section~\ref{subsec:motivate-factori} we motivate the structural features inherent in the training data from a function-space point of view. The specific construction of the factorization is case-dependent. We first address the empirical quadrature case in Section~\ref{subsec:separatedData-quad}, followed by a more general setting in Section~\ref{subsec:separatedData-gen}, which also encompasses the empirical cubature case.
\subsection{Structure induced by bilinear forms} \label{subsec:motivate-factori}
Recall from the definition of $\tilde{\Cp}$ in \eqref{eq:C-data} that the training data is composed of the terms $\beta_{\rm{*}}^m\left( f(\bv{x}^k) , \rom{\tL}\!^n \right)$ defined by localized bilinear forms
\begin{align*}
 \beta_{\rm{*}}^m: f(\funSpace{V}) \times \rom{\funSpace{V}} \rightarrow \R, \hspace{1cm} m = 1,\ldots , M.
\end{align*}
The nonlinear manifold $f(\funSpace{V})= \{y = f(x), x\in \funSpace{V} \}$ is induced by the nonlinearity $f$ we would like to approximate. Not the full domain is considered, but only nonlinear snapshots related to the state snapshots $x^k\in \funSpace{V}$, cf.~\eqref{eq:opt-cubquad}, i.e., the set
\begin{align*}
 S_f = \{y = f(x) \text{ for } x\in \{x^1, \ldots, x^K \} \, \} \subset f(\funSpace{V}).  
\end{align*}
Owing to the nonlinearity of the domain $f(\funSpace{V})$, no suitable representation of $S_f$ exists that would permit the use of standard data compression techniques. To address this difficulty, we adopt a data representation based on the action of the nonlinearity on the bilinear forms $\beta_{\!*}^m$, rather than on the elements of $S_f$ themselves. In this way, the analysis of the data structure is carried out from a dual-space perspective. More precisely, the factorization of the data matrix $\tilde{\Cp}$ assumed in \Cref{prob:cpca-C} is derived from respective factorizations of the local bilinear forms $\beta_{\!*}^m$.

\subsection{Structured data representation -- empirical quadrature case} \label{subsec:separatedData-quad}
Considering the training problem for the empirical quadrature, the bilinear form $\bform^{\rm{*}} = \bform^{\rm{quad}}$ specifies to
\begin{align*}
\bform^{\rm{*}}(f(\xrf),\tL^n) = \sum_{m=1}^{M} \tilde{\wei}_m \beta_{\rm{*}}^m(f(\xrf),\rom{\tL}\!^n)= \sum_{m=1}^{M} \tilde{\wei}_m {f} (\xrf({\xp}_m))\rom{\tL}\!^n({\xp}_m) 
\end{align*}
using $M = M_{\rm{quad}}$. In that case, the local bilinear forms $\beta_{\rm{*}}^m$ are separable already, in the sense that they can be divided into the two linear forms $f(\xrf) \mapsto f(\xrf({\xp}_m))$ and $\rom{\tL}\!^n \mapsto \rom{\tL}\!^n({\xp}_m)$. This simplifies our construction. Defining
{\small
\begin{align*}
	\bv{g}^k = 
	\begin{bmatrix}
			{f}(\fs{x}^k({\xp}_1)) \\
			\vdots \\
			{f}(\fs{x}^k({\xp}_{M}))
	\end{bmatrix}
\hspace{0.2cm} \text{for } k=1,\ldots K, \hspace{0.5cm} \text{and} \hspace{0.5cm}
	\tAlg^n = 
	\begin{bmatrix}
			\rom{\tL}\!^n({\xp}_1) \\
			\vdots \\
			\rom{\tL}\!^n({\xp}_M)
	\end{bmatrix}
	\hspace{0.2cm} \text{for } n=1,\ldots \Nr,
\end{align*}
}
we can write the sub-vectors $\vAp^{k,n}$ of \eqref{eq:C-data} in the factorized form
\begin{align*}
	\vAp^{k,n} = \bv{g}^k \circ \tAlg^n  \hspace{1cm} \text{ for } k= 1,\ldots K, \hspace{0.2cm} n=1,\ldots, \Nr.
\end{align*}
Using the equality $\bv{g}^k \circ \tAlg^n = \mydiag(\tAlg^n) \bv{g}^k$, the structure-revealing factorization assumed in \Cref{prob:cpca-C} specializes in this case to
{\small
\begin{align}
\begin{split} \label{eq:factori-equad}
\tilde{\Cp} = \bt{N} \hat{\bt{G}}, 
\hspace{0.4cm} & \text{with } \hspace{0.4cm}  \bt{N} \in \R^{M \Nr,M}, \hspace{0.2cm}  \hat{\bt{G}} \in \R^{M,K},\\
& \text{given by }\hspace{0.1cm}  \bt{N} = 
\begin{pmatrix}
	\mydiag\left(\tAlg^1\right) \\
	\vdots \\
	\mydiag\left(\tAlg^{\Nr}\right)
\end{pmatrix}, \hspace{0.3cm}
\bt{G} = 
\begin{pmatrix}
\bv{g}^1 & \ldots & \bv{g}^K  
\end{pmatrix} .
\end{split}
\end{align}
}
With reference to Remark~\ref{rem:comput-complex-cpca}, we note the specific structure of ${\bt{N}}$ in \eqref{eq:factori-equad} and its implications for the computational complexity of \Cref{alg:CPCA}. The columns of $\bt{N}$ are orthogonal to each other. Thus a QR factorization of the matrix can be constructed analytically, choosing $\bt{R}$ as the diagonal matrix with the reciprocal of the norms of the columns of $\bt{N}$ on the diagonal.

\subsection{Structured data representation -- general case} \label{subsec:separatedData-gen}
Next, we study the more general setting, in which the local bilinear forms $\beta_{\rm{*}}^m$ are not separable themselves. Our strategy to deal with that is to decompose the local forms further into terms which are separable and then pursue with similar arguments as in the empirical quadrature case. In general, there may exist different decomposition of the $\beta_{\rm{*}}^m$. Here, we suggest a construction based on the underlying \acro{FOM} basis and the assumption that each basis function has a small local support.

Recall from our general setting in Section~\ref{sec:motivSetting} that any \acro{ROM} basis function $\rom{\tL}\!^n$ is also an element of the \acro{FOM} space. Therefore, any $\rom{\tL}\!^n$ can be written as a linear combination $ \rom{\tL}\!^n= \sum_{i=1}^N \lambda_i^n \tL^i$ for $\tL^1, \ldots \tL^N$ given as the \acro{FOM} basis. Further, due to the small local support assumption on the \acro{FOM} basis, there exist index sets $J^m \subset \{1, \ldots M \}$ with small cardinalities $|J^m | \ll M$ such that for $i$ fixed the linear forms $\beta_{\rm{*}}^m(\cdot, \tL^i): f(\funSpace{V}) \to \R$ vanish for all $m \notin J^m$.
\begin{rmrk}
The cardinalities $J^m$ are small for the typical model reduction settings. For instance, when the \acro{FOM} originates from a discretization with nodal finite elements, $J^m$ equals the number of nodes per cell. For example, linear finite elements on a simplex mesh in spatial dimension $d$ yield $|J^m| = d+1$, whereas piecewise constant ansatz functions lead to $|J^m| = 1$.
\end{rmrk}
Under the posed assumptions, we may write
\begin{align*}
	\beta_{\rm{*}}^m(\cdot, \rom{\tL}^n) = \sum_{i= 1}^N \lambda_i^n \beta_{\rm{*}}^m(\cdot, \tL^i) = \sum_{i\in J^m} \lambda_i^n \beta_{\rm{*}}^m(\cdot, \tL^i) \hspace{1cm} \text{for } m = 1,\ldots, M.
\end{align*}
For any $x^k \in \funSpace{V}$ it then holds
\begin{align*}
	\beta_{\rm{*}}^m(f(x^k), \rom{\tL}^n) &=  \sum_{i\in J^m} \lambda_i^n \beta_{\rm{*}}^m(f(x^k), \tL^i) =  (\hat{\bv{p}}^{n,m})^T \hat{\bv{g}}^{k,m},
\end{align*}
for $\hat{\bv{p}}^{n,m}= [\lambda^n_i]_{i\in J^m} \in  \R^{|J^m|}$  and $\hat{\bv{g}}^{k,m}= [\beta_{\rm{*}}^m(f(x^k), \tL^i)]_{i\in J^m} \in  \R^{|J^m|}$. 
Define the sum of all cardinalities $M_J= \sum_{m=1}^M |J^m|$ and the concatenated vectors $\hat{\bv{p}}^{n}, \hat{\bv{g}}^{k} \in \R^{M_J}$ by
{\small
\begin{align*}
\hat{\bv{p}}^{n}=  \begin{bmatrix}
\hat{\bv{p}}^{n,1} \\ \vdots \\ \hat{\bv{p}}^{n,M}
\end{bmatrix}, \quad n = 1,\ldots \Nr \hspace{0.5cm} \text{and} \hspace{0.5cm}
\hat{\bv{g}}^{k}=  \begin{bmatrix}
\hat{\bv{g}}^{k,1} \\ \vdots \\ \hat{\bv{g}}^{k,M}
\end{bmatrix},  \quad k = 1,\ldots K.
\end{align*}
}
Let $\bt{T} \in \R^{M_J, M}$ be the block-diagonal matrix that has $|J^m|$ one-entries on the $m$-th row. Then sub-vectors $\vAp^{k,n}$ of \eqref{eq:C-data} can be written in the factorized form
\begin{align*}
	\vAp^{k,n} = \beta_{\rm{*}}(f(x^k), \rom{\tL}^n) = \bt{T} \left( \hat{\bv{p}}^{n} \circ  \hat{\bv{g}}^{k} \right) =  \left( \bt{T} \mydiag(\hat{\bv{p}}^{n}) \right) \hat{\bv{g}}^{k}.
\end{align*}
Summarizing, the structure-revealing factorization assumed in \Cref{prob:cpca-C} specializes in this case to
{\small
\begin{align}
\begin{split} \label{eq:factori-egenarl}
\tilde{\Cp} = {\bt{N}} \hat{\bt{G}}, 
\hspace{0.4cm} & \text{with } \hspace{0.4cm}  {\bt{N}} \in \R^{M \Nr,M_J}, \hspace{0.2cm}  \hat{\bt{G}} \in \R^{M_J,K},\\
& \text{given by }\hspace{0.1cm}  {\bt{N}} = 
\begin{pmatrix}
	\bt{T} \mydiag\left(\hat\tAlg^1\right) \\
	\vdots \\
	\bt{T} \mydiag\left(\hat\tAlg^{\Nr}\right)
\end{pmatrix}, \hspace{0.3cm}
\hat{\bt{G}} = 
\begin{pmatrix}
\hat{\bv{g}}^1 & \ldots & \hat{\bv{g}}^K  
\end{pmatrix} .
\end{split}
\end{align}
}
Note that the matrix ${\bt{N}}$ in \eqref{eq:factori-egenarl} separates into $M$ blocks of column vectors, where each block spans a space that is orthogonal to each of the other ones. Exploiting that structure and since $\bt{N}$ is sparse, it is possible to parallelize the calculation of the QR factorization efficiently, and the resulting $\bt{R}$ then also has a favorable sparse block-diagonal structure.

As an alternative to the previous approach, we propose and employ a simplified compression strategy. Rather than applying data compression to the full matrix $\tilde{\Cp}$ in \eqref{eq:factori-egenarl}, we restrict it to the factor $\breve{\Cp}$, defined as
{\small
\begin{align} \label{eq:factori-simple-egenarl}
\breve{\Cp} := \breve{\bt{N}} \hat{\bt{G}}, \hspace{0.5cm} \text{where } \breve{\bt{N}} = \begin{pmatrix}
	\mydiag\left(\hat\tAlg^1\right) \\
	\vdots \\
	\mydiag\left(\hat\tAlg^{\Nr}\right)
\end{pmatrix}.
\end{align}
}
This construction satisfies $\tilde{\Cp} = \bt{T}_{\rm all}\breve{\Cp}$, where $\bt{T}_{\rm all}$ is a block-diagonal matrix consisting of $\Nr$ copies of $\bt{T}$.
The QR factorization of $\breve{\bt{N}}$ involves only column-wise operations and produces a diagonal QR factor $\bt{R}$, closely mirroring the structure encountered in empirical quadrature settings and can therefore be implemented in a similar manner.

\begin{rmrk}
Within the simplified compression approach, we approximate $\tilde{\Cp} \approx \bt{T}_{\rm{all}} \thi{\breve{\Cp}}$, where $\thi{\breve{\Cp}}$ is obtained from using \Cref{alg:CPCA} to the data of $\breve{\Cp}$. Since $\|\bt{T}_{\rm{all}}\| =\|\bt{T}\| = \max_{m} \sqrt{J_m}$ due to the block-structure, the approximation can be bounded as
{\small
\begin{align*}
\| \tilde{\Cp} - \bt{T}_{\rm{all}} \thi{\breve{\Cp}} \|_{\rm{F}} = \| \bt{T}_{\rm{all}} ({\breve{\Cp}} - \thi{\breve{\Cp}}) \|_{\rm{F}} \leq 
 \|\bt{T}_{\rm{all}}\| \, \|\breve{\Cp} - \thi{\breve{\Cp}} \|_{\rm{F}} = \left(\max_{m} \sqrt{J_m} \right)\, \|\breve{\Cp} - \thi{\breve{\Cp}} \|_{\rm{F}},
\end{align*}
}
where we used the fact that  $\| \bt{T}_{\rm{all}} \bt{M} \|_{\rm{F}} \leq 
 \|\bt{T}_{\rm{all}}\| \, \| \bt{M} \|_{\rm{F}}$ holds for any matrix $\bt{M}$ independently of the specific structure of $\bt{T}_{\rm{all}}$.
\end{rmrk}

\section{Error analysis of proposed preprocessing} \label{sec:error-analysis}
In this section, we derive an effective a posteriori bound and an apriori bound that relate \Cref{prob:con-CrTrain} to \Cref{prob:unc-CrTrain}, thereby bounding the training error in our proposed preprocessing approach.
For notational convenience, we introduce a prolongated representation of the compressed cost functional~$\thi{\fopt}$. Instead of \eqref{eq:foptThin}, we use the equivalent form
\begin{align} \label{eq:costFunProl}
	\thi{\fopt}(\vwei) = \| \prol{\Ap}( \vwei - \tilde{\vwei}) \|, \hspace{1cm} \text{with } \prol{\Ap}\in \R^{{K}\Nr , M},
\end{align}
where $\prol{\Ap} \approx \tilde{\Ap}$ is chosen such that $\prol{\Ap}= \tilde{\bt{Q}}\thi{\Ap}$ for an orthogonal matrix $\tilde{\bt{Q}} \in \R^{K\Nr , \thi{K}\Nr}$.
Recall that the proposed preprocessing is based on a structured approximation of $\tilde{\Cp} \approx \bt{N} {\bt{G}}_p^*$, which is a  permutation of $\tilde{\Ap}$. Moreover, the structured imposed by the factorization reduces the dimension of the problem. Due to the invariance of the Frobenius norm under matrix permutations, we have $\| \tilde{\Ap} - \prol{\Ap} \|_{\rm{F}}$ $=\|  \tilde{\Cp} - \bt{N} {\bt{G}}_p^* \|_{\rm{F}}$, cf.\,Remark~\ref{rem:best-rank-k-equivalence}.
Thus, the compression error
\begin{align} \label{eq:kap-error}
\kappa = \| \tilde{\Ap} - \prol{\Ap} \|_{\rm{F}} = \|  \tilde{\Cp} - \bt{N} {\bt{G}}_p^* \|_{\rm{F}} = \sqrt{\sum_{i > \thi{K}} \sigma_i^2}
\end{align}
can be computed efficiently using \Cref{cor:err-truncSVD}, with $\bt{N}$, ${\bt{G}}_p^*$ and the constant $\kappa$ as defined in the corollary. The truncated singular values $\sigma_i$ for $i> \thi{K}$ are computed in the compression step of \Cref{alg:CPCA} without incurring additional computational cost, since they are directly produced by the \acro{SVD}.
%
\begin{theorem}\label{the:CompressedOptFun}
Consider \Cref{prob:unc-CrTrain} and \Cref{prob:con-CrTrain}. Let $\prol{\Ap}$ be defined as in~\eqref{eq:costFunProl}, and let $\kappa$ be as in~\eqref{eq:kap-error}. Define $d_{\rm min}$ as the smallest entry of $\dreg$, and assume $d_{\rm min} > 0$. Let $\vwei$ be a feasible point of \Cref{prob:con-CrTrain} such that $|\dreg^T(\vwei - \tilde{\vwei})| \leq \epsilon$ for some $\epsilon \geq 0$.

Then the cost function $\fopt : \vwei \mapsto \|\tilde{\Ap}(\vwei - \tilde{\vwei})\|$ of \Cref{prob:unc-CrTrain} satisfies the following two bounds:
\begin{align*}
\fopt(\vwei)   &\leq  \thi{\fopt}(\vwei) + \kappa  ||\vwei - \tilde{\vwei}||,  &&  \text{(a posteriori bound)} \\
\fopt(\vwei)  &\leq \thi{\fopt}(\vwei) + \kappa  \left( \frac{\sqrt{\com{M}}}{d_{\rm{min}}} ( \epsilon+ \dreg^T \tilde{\vwei})  + \|\tilde{\vwei}\| ) \right).
&& \text{(a priori bound)}
\end{align*}
\end{theorem}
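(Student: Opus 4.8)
The plan is to derive both estimates from a single decomposition of the full residual, treating the compression error $\tilde{\Ap} - \prol{\Ap}$ as a perturbation and exploiting that its Frobenius norm equals $\kappa$ by \eqref{eq:kap-error}.

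For the \emph{a posteriori} bound I would first write
\[
\tilde{\Ap}(\vwei - \tilde{\vwei}) = \prol{\Ap}(\vwei - \tilde{\vwei}) + (\tilde{\Ap} - \prol{\Ap})(\vwei - \tilde{\vwei}),
\]
apply the triangle inequality for the Euclidean norm, and identify the first term with $\thi{\fopt}(\vwei)$ through the prolongated representation \eqref{eq:costFunProl}. For the second term I would pass to the spectral norm, $\|(\tilde{\Ap} - \prol{\Ap})(\vwei - \tilde{\vwei})\| \le \|\tilde{\Ap} - \prol{\Ap}\|\,\|\vwei - \tilde{\vwei}\|$, and then use the elementary inequality $\|\bt{M}\| \le \|\bt{M}\|_{\rm F}$ between spectral and Frobenius norm to replace $\|\tilde{\Ap} - \prol{\Ap}\|$ by $\kappa$. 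This immediately yields $\fopt(\vwei) \le \thi{\fopt}(\vwei) + \kappa\,\|\vwei - \tilde{\vwei}\|$.

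For the \emph{a priori} bound the only additional task is to replace the data-dependent factor $\|\vwei - \tilde{\vwei}\|$ by quantities known before training. I would split $\|\vwei - \tilde{\vwei}\| \le \|\vwei\| + \|\tilde{\vwei}\|$ and control $\|\vwei\|$ using the feasibility of $\vwei$ in \Cref{prob:con-CrTrain}. The crucial step is to convert the single scalar constraint into a sup-norm bound: since every entry of $\dreg$ is at least $d_{\rm min} > 0$ and every entry of a feasible $\vwei$ is nonnegative, no cancellation occurs in $\dreg^T\vwei$, so that $d_{\rm min}\,\|\vwei\|_\infty \le \dreg^T\vwei = \dreg^T(\vwei - \tilde{\vwei}) + \dreg^T\tilde{\vwei} \le \epsilon + \dreg^T\tilde{\vwei}$, whence $\|\vwei\|_\infty \le (\epsilon + \dreg^T\tilde{\vwei})/d_{\rm min}$. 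Because $\vwei$ has at most $\com{M}$ nonzero entries, the Euclidean norm obeys $\|\vwei\| \le \sqrt{\com{M}}\,\|\vwei\|_\infty$, and substituting this chain into the a posteriori bound produces the stated estimate.

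The calculations are elementary; the single genuine point is the passage from the one linear inequality $|\dreg^T(\vwei - \tilde{\vwei})| \le \epsilon$ to componentwise control of $\vwei$. This is exactly where the hypotheses $d_{\rm min} > 0$ and the nonnegativity of $\vwei$ are indispensable, as they rule out cancellation in $\dreg^T\vwei$; I expect this to be the step most in need of care, while the sparsity relation $\|\vwei\| \le \sqrt{\com{M}}\,\|\vwei\|_\infty$ is what introduces the factor $\sqrt{\com{M}}$ in the final bound.
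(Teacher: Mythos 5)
Your proposal is correct and follows essentially the same route as the paper: the a posteriori bound via the identical perturbation--triangle-inequality--Frobenius-norm chain, and the a priori bound by combining feasibility, the nonnegativity of $\vwei$ and $\dreg$, and the cardinality constraint, exactly as in the paper's proof. The only (cosmetic) difference is how the sparsity is harvested: you bound $\|\vwei\|_\infty \le \dreg^T\vwei/d_{\rm min}$ and use $\|\vwei\| \le \sqrt{\com{M}}\,\|\vwei\|_\infty$, whereas the paper bounds $\|\vwei\|_1 \le \dreg^T\vwei/d_{\rm min}$ and uses $\|\vwei\| \le \sqrt{\com{M}}\,\|\vwei\|_1$; both chains are valid and produce the stated constant.
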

\begin{proof}
By definition of the cost functions, it follows
{\small
\begin{align*}
\fopt(\vwei) &=|| \tilde{\Ap} (\vwei - \tilde{\vwei}) || \leq || (\tilde{\Ap} - \prol{\Ap}) (\vwei - \tilde{\vwei}) || + || \prol{\Ap} (\vwei - \tilde{\vwei}) ||  \\
&=  || (\tilde{\Ap} - \prol{\Ap}) (\vwei - \tilde{\vwei}) || + \thi{\fopt}(\vwei) \leq || (\tilde{\Ap} - \prol{\Ap})|| \, ||\vwei - \tilde{\vwei}||  + \thi{\fopt}(\vwei) \\
&\leq  || (\tilde{\Ap} - \prol{\Ap})||_{\rm{F}} \, ||\vwei - \tilde{\vwei}||  + \thi{\fopt}(\vwei)  = \kappa ||\vwei - \tilde{\vwei}|| + \thi{\fopt}(\vwei),
\end{align*}
}
using the triangle inequality, sub-multiplicativity, and the consistency of the Frobenius norm with the spectral norm. This proves the a posteriori bound.

To proof the a priori bound, it remains to bound the term $||\vwei - \tilde{\vwei}||$ independently of $\vwei$. First note that
due to the non-negativity of $\vwei$ and $\dreg$, respectively, the one-norm of $\vwei$ fulfills
\begin{align*}
||\vwei||_1 = \sum_{m\in I_c} |\wei_m|  =  \frac{1}{d_{\rm{min}}} \left(\sum_{m\in I_c} d_{\rm{min}} \wei_m \right)  \leq  \frac{1}{d_{\rm{min}}}(\dreg^T \vwei ).
\end{align*}
Since $\vwei$ has at most $\com{M}$ nonzero entries, it can be identified with a vector in $\R^{\com{M}}$, implying the norm relation $||\vwei|| \leq \sqrt{\com{M}} ||\vwei||_1$. Using the triangle inequality, we obtain
\begin{align*}
\| \vwei - \tilde{\vwei} \| &\leq  \| \vwei \| + \|\tilde{\vwei}\|
\leq \sqrt{\com{M}}  \| \vwei \|_1  + \|\tilde{\vwei}\| 
 \leq      \frac{\sqrt{\com{M}}}{d_{\rm{min}}}(\dreg^T \vwei ) + \|\tilde{\vwei}\| 
\\ &\leq \frac{\sqrt{\com{M}}}{d_{\rm{min}}} ( \epsilon+ \dreg^T \tilde{\vwei})  + \|\tilde{\vwei}\| .
\end{align*}
The a priori bound now follows from inserting the latter bound on $\| \vwei - \tilde{\vwei} \|$ into the a posteriori bound.
\end{proof}
The first bound in \Cref{the:CompressedOptFun} provides a rigorous and computationally efficient estimate of the training error $\fopt(\vwei)$ that only depends on the compression error  $\kappa$ calculated in \Cref{alg:CPCA} but not on the full solution manifold matrix~$\tilde{\Ap}$.
The second bound offers an a priori estimate of the difference $\fopt(\vwei) - \thi{\fopt}(\vwei)$ that is independent of the argument $\vwei$. Concerning the constants in the latter we highlight the following: The regularization term in \Cref{prob:con-CrTrain} enforces $\epsilon$ to be small, cf.~the paragraph after \eqref{eq:opt-cubquad}. The quantities $\|\tilde{\vwei}\|$, $d_{\rm min}$, and $\dreg^T \tilde{\vwei}$ depend only on the \acro{FOM}; in particular, $\dreg^T \tilde{\vwei}$ equals the volume of $\Omega$ under the setting of Section~\ref{sec:ansatzEquadCub}.




\newcommand{\blsVertex}{{{\nu}}} 
 \newcommand{\Onepipe}{e}
\newcommand{\tfb}{w}

\newcommand{\xrea}{{\rho}}
\newcommand{\xDom}{{\Omega}}
\newcommand{\xSurf}{{\Gamma}}
\newcommand{\cTests}{{C}}

\newcommand{\SKFEM}{\acro{SKFEM}}

\section{Numerical performance study} \label{sec:numericPerformance}
The focus of our numerical studies is the comparison of complexity reduction without and with our proposed preprocessing via structured compression. We refer to the former as '\acro{standard training}' and to the latter as '\acro{compressed training}'. The compression is carried out using \Cref{alg:CPCA}. Two representative nonlinear test problems are considered: one defined on a three-dimensional cube and another modeling a gas transport network; see Fig.~\ref{fig:spatialDomains} for the spatial domains.

The three-dimensional problem consists of a nonlinear reaction–diffusion equation. With sufficiently fine spatial meshes, this example can be scaled to high dimensions. It serves as the primary benchmark for demonstrating improvements in offline computational time and memory requirements.

The network problem models gas distribution in a pipeline network using cross-sectionally averaged nonlinear flow equations. We adopt the structure-preserving approximation from \cite{art:bls-snapBasedMorComp}, which requires the use of the cell-based empirical cubature. In this example, we also demonstrate the tightness of the a posteriori error bound introduced in Section~\ref{sec:error-analysis}.

For both problems, we employ model order reduction based on proper orthogonal decomposition (\acro{POD}). Time discretization is performed using the implicit Euler method, combined with a Newton method for solving the resulting algebraic systems. All computations were carried out on an Intel Xeon E-2136 processor with 6 cores and 48~GB of RAM using \acro{Python} version 3.11.4, respectively \acro{MATLAB} Version 9.14.0 (R2023a).
Additional problem-specific parameter choices are provided below.



\begin{figure}[htb]
    \centering
    \begin{minipage}[t]{0.48\textwidth}
        \centering
        \includegraphics[width=0.7\textwidth]{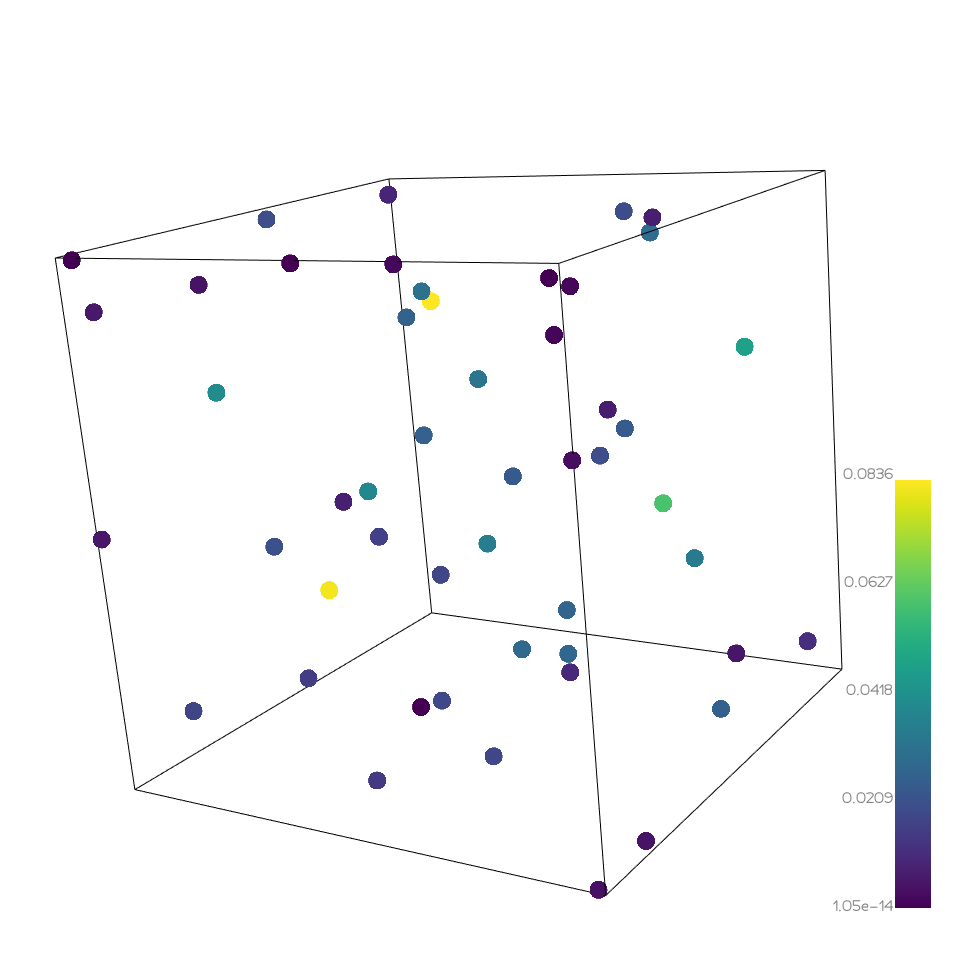}
    \end{minipage}
    \hfill
    \begin{minipage}[t]{0.48\textwidth}
        \centering
        \includegraphics[width=0.72\textwidth]{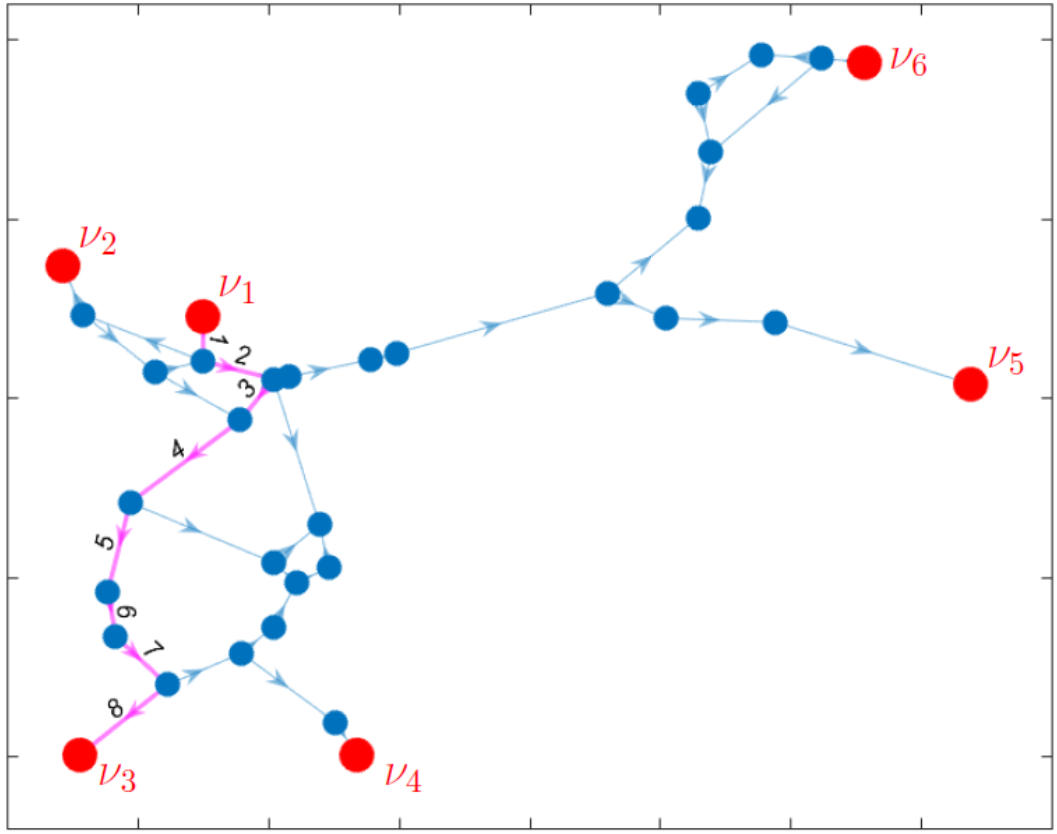}
    \end{minipage}

    \caption{Visualization of the spatial domains for both numerical benchmarks. Left: reaction-diffusion equation domain plus $M_c=50$ trained empirical quadrature points; right: gas network topology from~\cite{art:bls-snapBasedMorComp} with boundary nodes highlighted in red.}
    \label{fig:spatialDomains}
\end{figure}

\subsection{Three-dimensional reaction-diffusion equation} \label{subsec:num-reacDiff}
Let $\xDom \subset \R^3$ denote the unit cube with boundary $\partial \Omega$ and outward normal vector $\bv{n}_{\xSurf}:\partial \Omega \to \R^3$.  
Furthermore, define the subset of the boundary in the upper-right corner by  $\xSurf_{\rm{top,right}}=  \{ [\xp_1,\xp_2,\xp_3] \in \partial \Omega \mid \xp_1 = 1 \text{ or } \xp_3 = 1 \}$, and the final time $t_{end}=1.5$
We consider the reaction-diffusion equation for $t\in [0,t_{end}]$,
\begin{align*}
\frac{\partial}{\partial_t} \xrea(t,\xp)  = -\nabla (\bt{D} \nabla \xrea(t,\xp) ) + f(\xrea(t,\xp)) \hspace{1cm} &\text{for} \, \xp \in \xDom \\
				(\bt{D} \nabla \xrea(t,\xp)) \cdot \bv{n}_{\xSurf}(\xp) =   g_{\xSurf}(t,\xp;C)  \hspace{1cm} &\text{for} \, \xp \text{ on } \xSurf_{\rm{top,right}} \\
				(\bt{D} \nabla \xrea(t,\xp)) \cdot \bv{n}_{\xSurf}(\xp) =   0  \hspace{1cm} &\text{for} \, \xp \text{ on } \partial \Omega \setminus \xSurf_{\rm{top,right}}\\
\end{align*}
The diffusion tensor is given by $\bt{D}= \mydiag([1, 0.5, 0.2])$ describing anisotropic diffusion. 
The nonlinearity is chosen as $f(\xrea) =  \xrea / (1 + 0.5\xrea)$, which is of the general form used to model saturating kinetics \cite{art:rd-johnson2011original,art:rd-Frank2018}.
{\small
\begin{figure}[htb]
  \centering

  \begin{subfigure}[b]{0.32\textwidth}
    \includegraphics[width=0.9\textwidth]{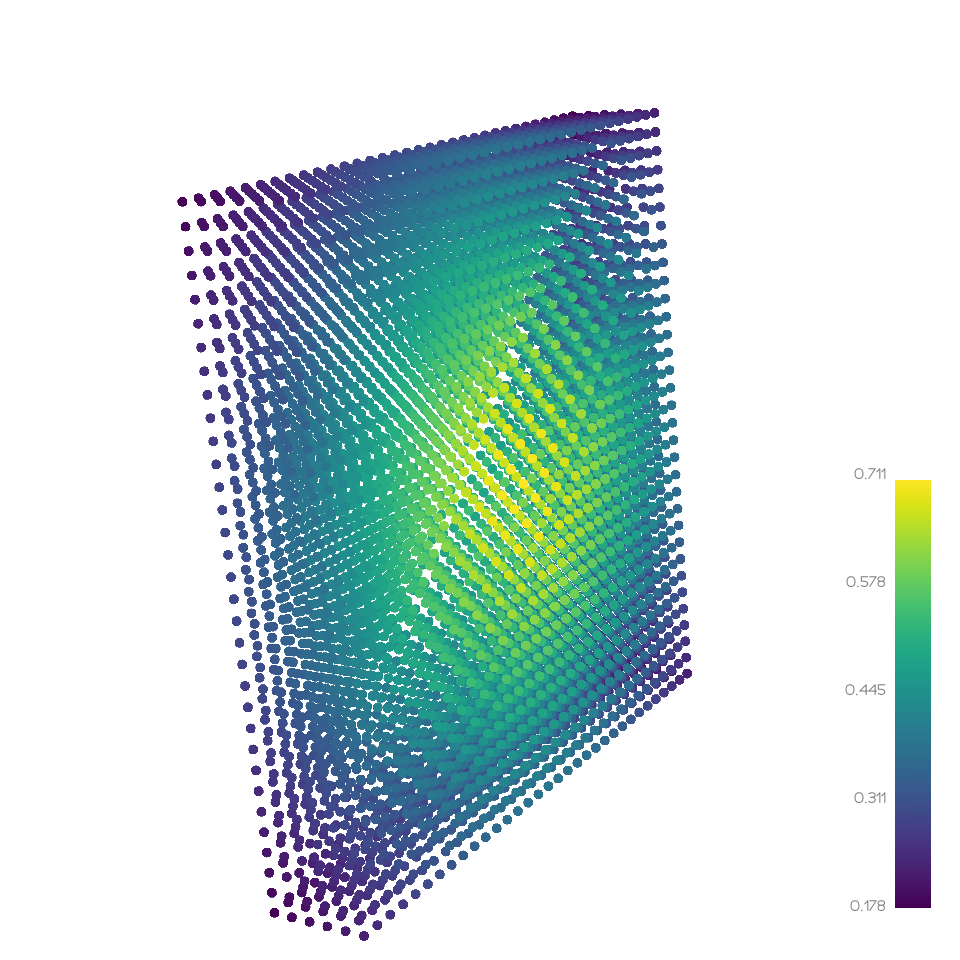}
    \caption{$t = 0.0$, $0\leq \xp_3 \leq 0.2$}
    \label{fig:z01_t0}
  \end{subfigure}
  \hfill
  \begin{subfigure}[b]{0.32\textwidth}
    \includegraphics[width=0.9\textwidth]{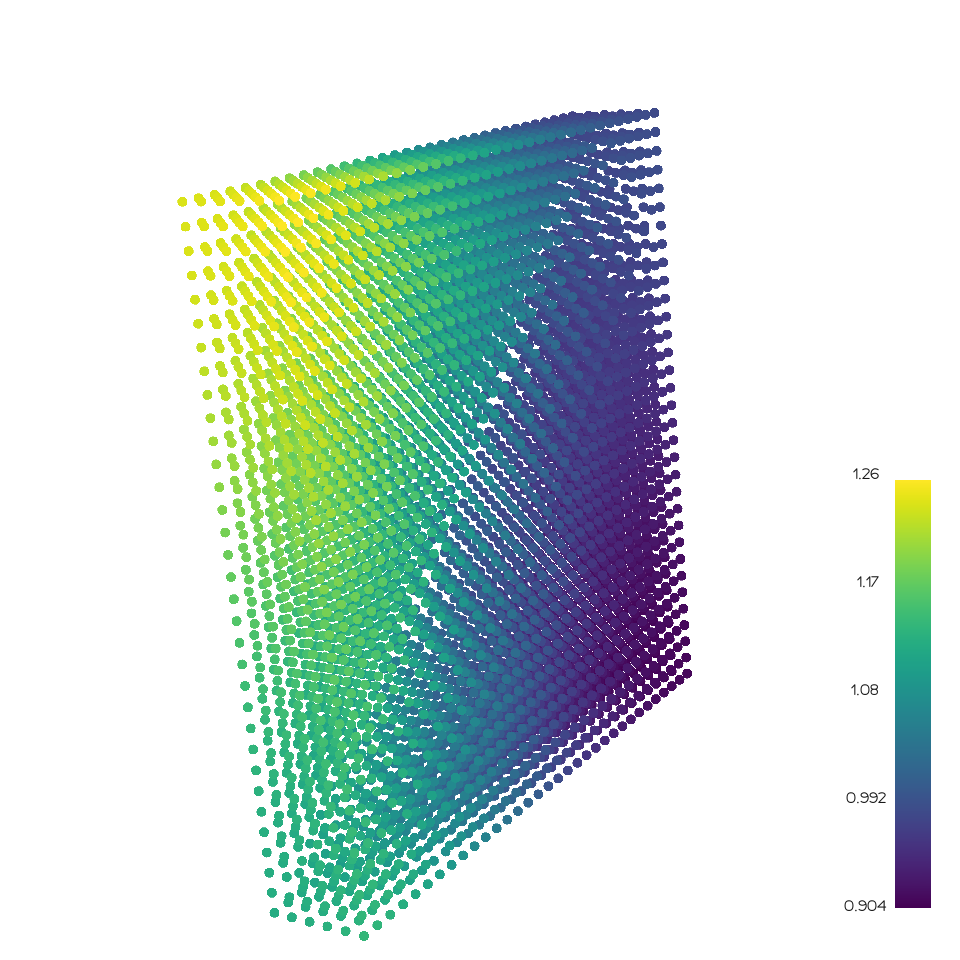}
    \caption{$t = 0.5$, $0\leq \xp_3 \leq 0.2$}
    \label{fig:z01_t05}
  \end{subfigure}
  \hfill
  \begin{subfigure}[b]{0.32\textwidth}
    \includegraphics[width=0.9\textwidth]{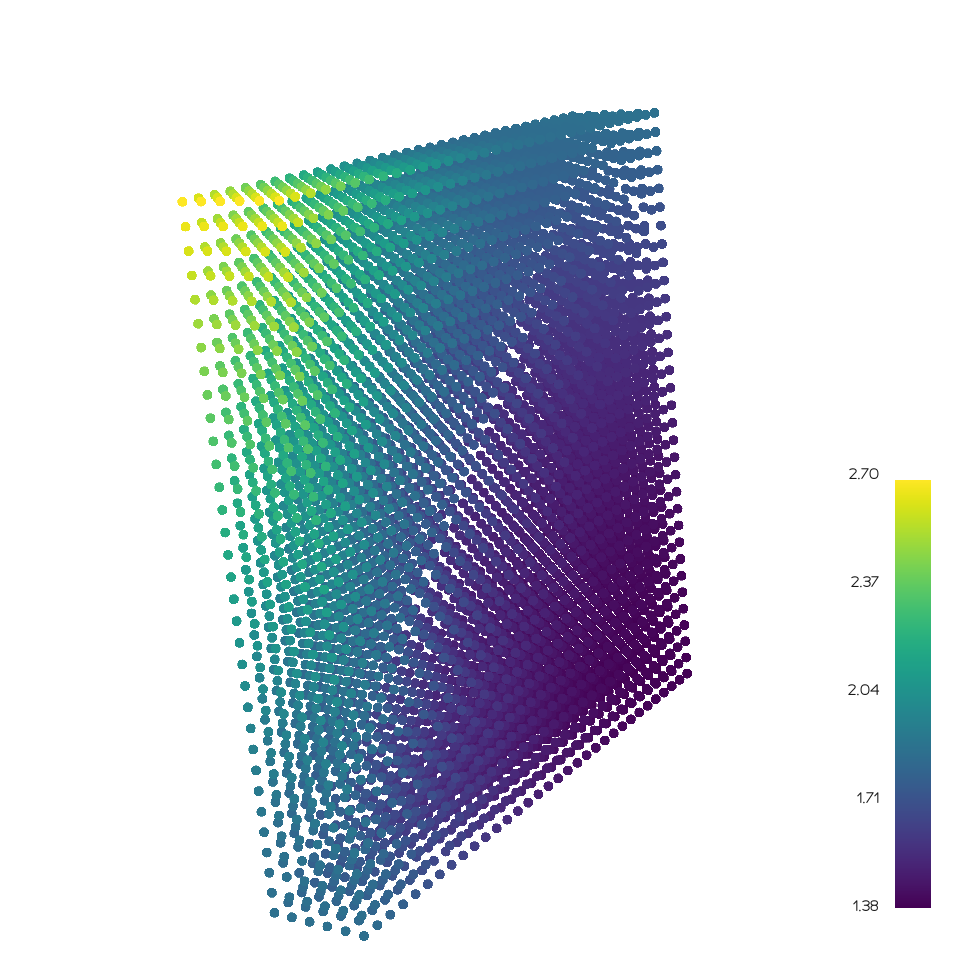}
    \caption{$t = 1.5$, $0\leq \xp_3 \leq 0.2$}
    \label{fig:z01_t15}
  \end{subfigure}

  \vspace{-1.2em}
  \begin{subfigure}[b]{0.32\textwidth}
    \includegraphics[width=0.9\textwidth]{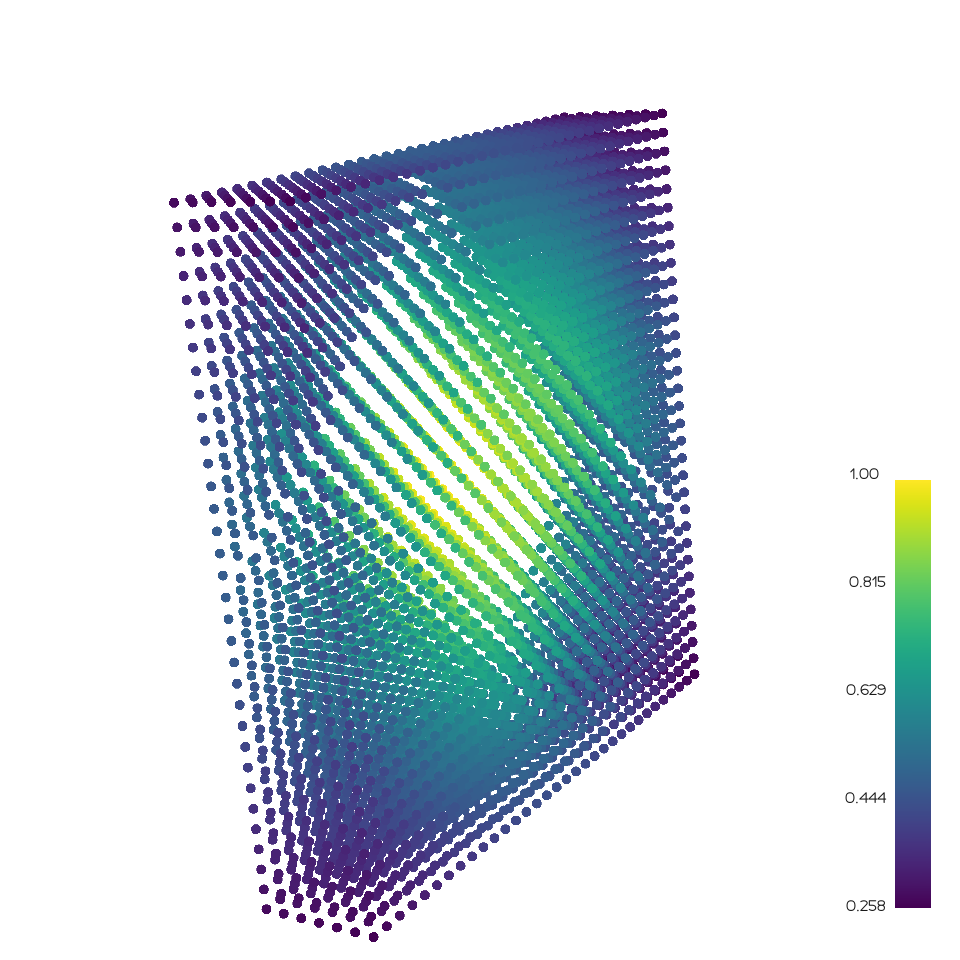}
    \caption{$t = 0.0$, $0.5\leq \xp_3 \leq 0.7$}
    \label{fig:z06_t0}
  \end{subfigure}
  \hfill
  \begin{subfigure}[b]{0.32\textwidth}
    \includegraphics[width=0.9\textwidth]{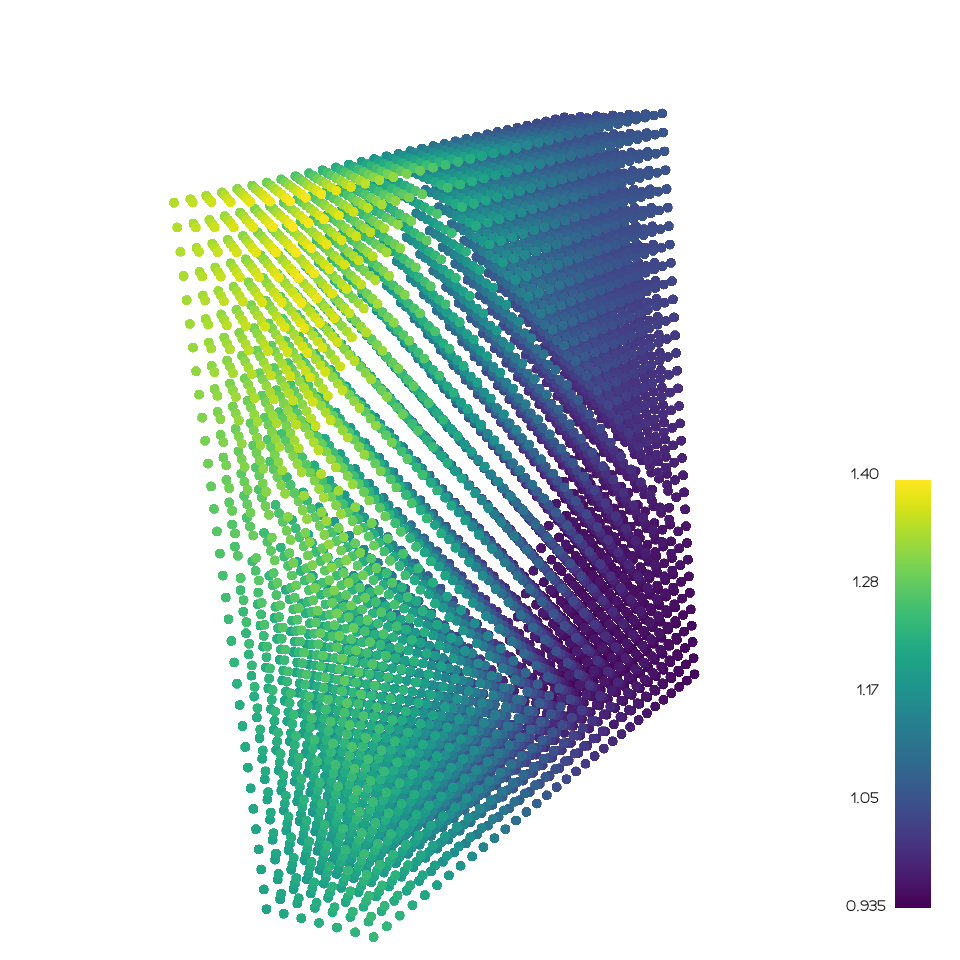}
    \caption{$t = 0.5$, $0.5\leq \xp_3 \leq 0.7$}
    \label{fig:z06_t05}
  \end{subfigure}
  \hfill
  \begin{subfigure}[b]{0.32\textwidth}
    \includegraphics[width=0.9\textwidth]{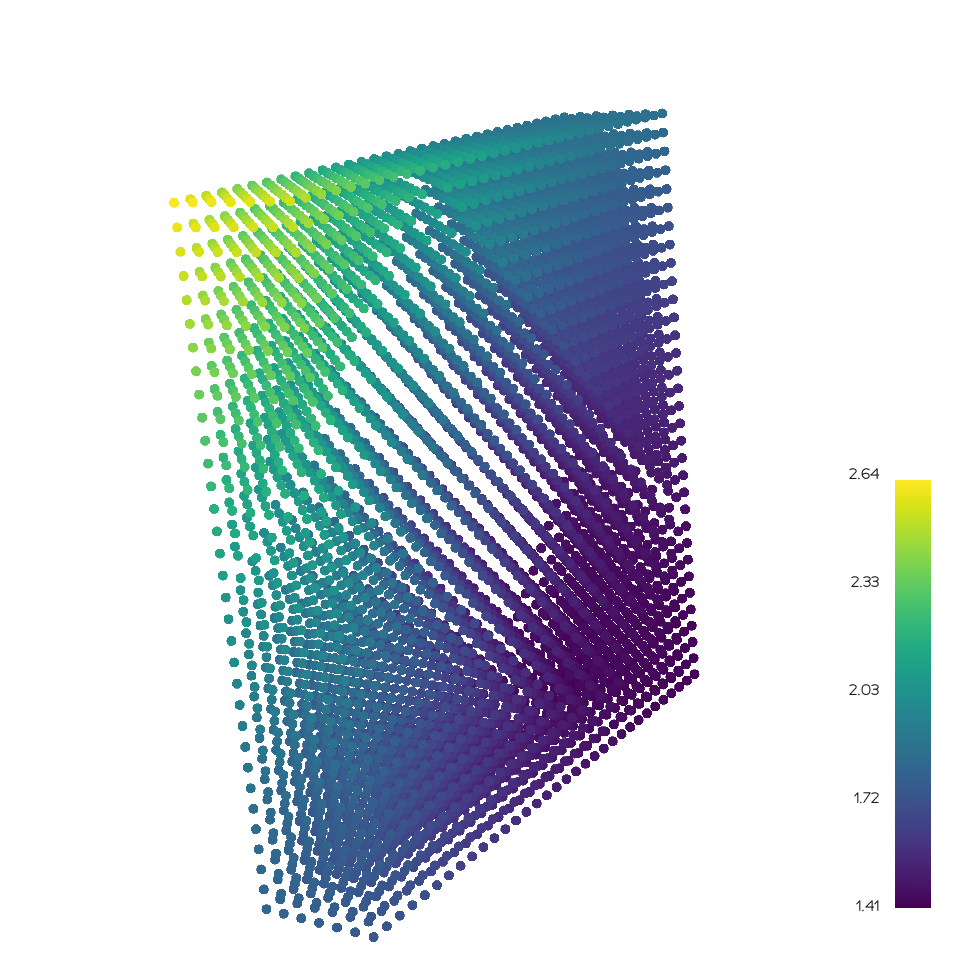}
    \caption{$t = 1.5$, $0.5\leq \xp_3 \leq 0.7$}
    \label{fig:z06_t15}
  \end{subfigure}
  \caption{Reaction-diffusion equation with $\cTests =0.75$: \acro{FOM} solution visualized by slices of width 0.2 at different time points. (\acro{FOM} with $163\,840$ cells.)}
  \label{fig:reacdiff_slices}
\end{figure}
}
The system is closed by initial conditions $\xrea(0,\xp) = \xrea_0(\xp;C)$ for $\xp \in \xDom$, where $\xrea_0$ is varied over a parameter $\cTests \in [0,1]$, according to
{\small
\begin{align*}
 \xrea_0(\xp;C) = (1-\cTests)   \exp \left( -\sum_{i=1}^3 \frac{(\xp_i - 0.5)^2)}{0.1} \right)  + \, \cTests \,\exp \left( -\sum_{i=1}^3 \frac{(\xp_i - 0.5)^2)}{0.5} \right)
\end{align*}
}
for  $\xp=[\xp_1,\xp_2,\xp_3] \in \xDom$. Similarly, the Neumann boundary conditions are varied according to
{\small
\begin{align*}
g_{\xSurf}(t,\xp;C) &= (1-\cTests)  \, g_{1}(\xp) \sin(6  t)(t) \, + \, \cTests \, g_{2}(\xp) (t - 0.2) \cos(4t)\\
\text{where }& g_{1}(\xp) = \sum_{i=1}^3 \xp_i, \hspace{0.4cm} \text{and} \hspace{0.3cm}
	g_{2}(\xp) = \sin(\xp_1) + \cos(6 \xp_2) (0.3-\xp_3^2).
\end{align*}
}
We refer to Fig.~\ref{fig:reacdiff_slices} for a visualization of the solution for the trajectory with $\cTests =0.75$.

The spatial discretization is carried out using the open-source Python library \acro{scikit{\-}-fem}\footnote{\url{https://github.com/kinnala/scikit-fem/releases/tag/11.0.0}} \cite{skfem2020}, employing linear finite elements on a tetrahedral mesh with varying levels of refinement. The nonlinear term is integrated numerically using a four-point symmetric Gauss rule, meaning that the number of quadrature points in the \acro{FOM} is four times the number of cells.

For all results, we use a \acro{ROM} dimension of $\Nr=35$. The training of the \acro{POD} basis and the empirical quadrature is carried out with snapshots from the training scenarios $\cTests \in \{0, 0.5, 1\}$. By taking a snapshot every second time step, and since we employ a constant time step $\Delta_t = 0.002$, we obtain a total of $2253$ snapshots for both the state and the nonlinearity.

\paragraph{Performance of standard vs.\,compressed training}
As is shown in Fig.~\ref{fig:rd-numEq}, our preprocessing significantly reduces memory requirements without noticeably affecting the fidelity of the reduced model for this example.
The left part of the figure shows that the \acro{standard training} contains approximately $40{\,}000$ equations, i.e,. that amount of rows in the solution manifold matrix $\tilde{\Ap}$. The number of columns corresponds to the number of quadrature points. For the finest discretization shown in our results
we have $1\,533{\,}520$ quadrature points, which results in roughly $3$ billion entries in the dense $\tilde{\Ap}$ and a memory requirement of $450$~GB (assuming 8 bytes per entry). Thus, this setting and others are not treatable with the standard approach on a normal working station.
In contrast, the \acro{compressed training} reduces the number of equations to about $5\%$, and thus the memory requirements are reduced by a factor of $20$.

\begin{figure}[htb]
    \centering
    \begin{minipage}[t]{0.45\textwidth}
        \centering
        \includegraphics[width=0.88\textwidth]{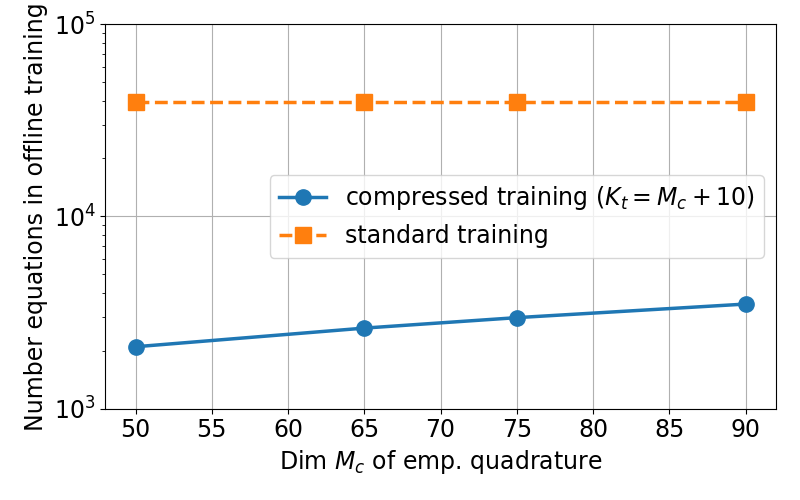}
    \end{minipage}
    \hfill
    \begin{minipage}[t]{0.45\textwidth}
        \centering
        \includegraphics[width=0.88\textwidth]{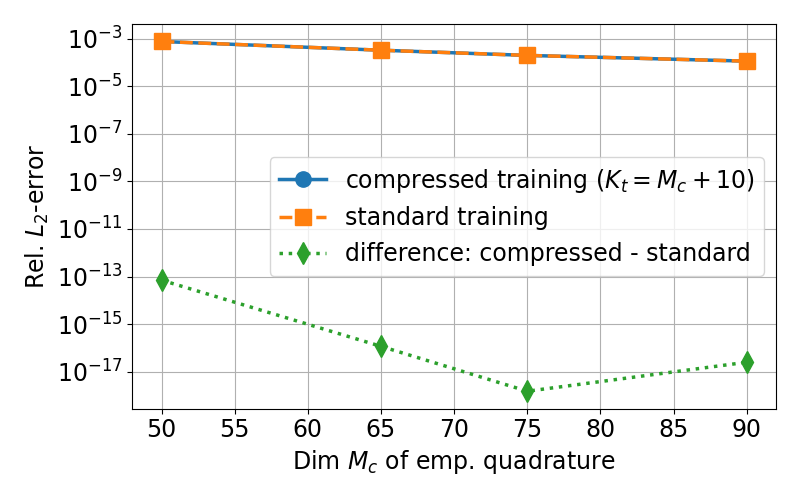}
    \end{minipage}
\caption{Reaction-diffusion equation. Training performance for varying $M_c$, with and without compression. Left: Number of equations in offline training (independent of \acro{FOM} dimension). Right: Relative  space-time $L_2$-errors of \acro{CROM} for the scenario with $\cTests =0.75$ and \acro{FOM} with $56{\,}025$  cells.
\label{fig:rd-numEq}}
\end{figure}

\begin{figure}[htb]
    \centering
    \begin{subfigure}[b]{0.32\textwidth}
        \includegraphics[width=\textwidth]{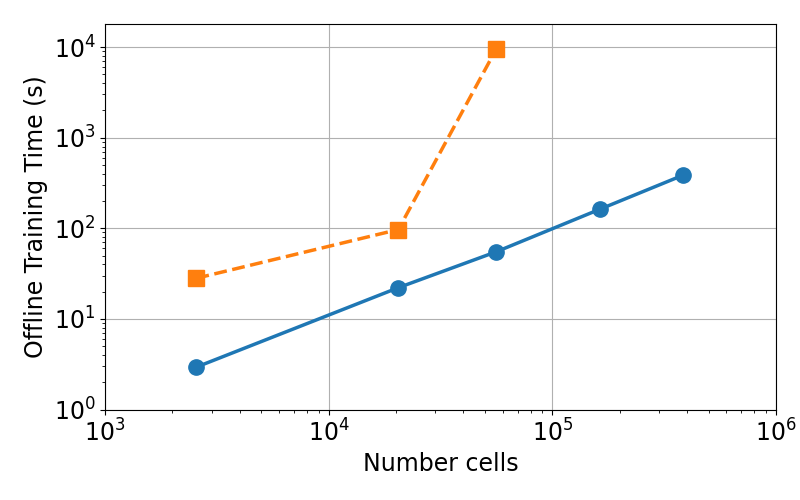}
        \caption{$M_c = 50$}
    \end{subfigure}
    \hfill
    \begin{subfigure}[b]{0.32\textwidth}
        \includegraphics[width=\textwidth]{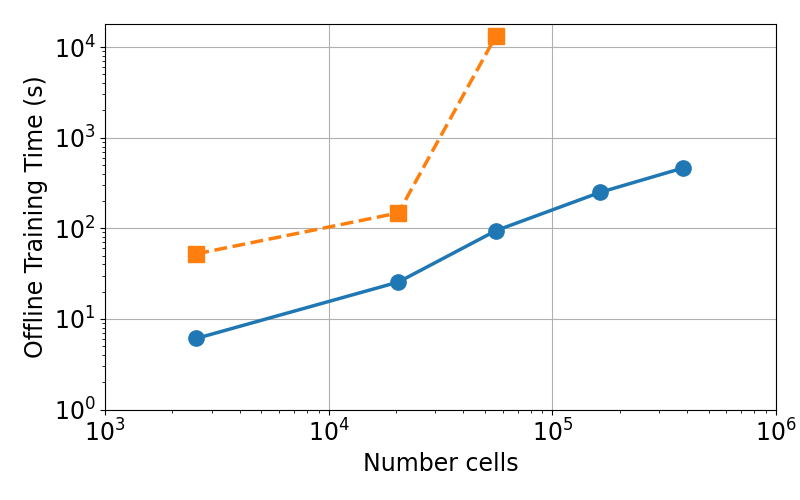}
        \caption{$M_c = 75$}
    \end{subfigure}
    \hfill
    \begin{subfigure}[b]{0.32\textwidth}
        \includegraphics[width=\textwidth]{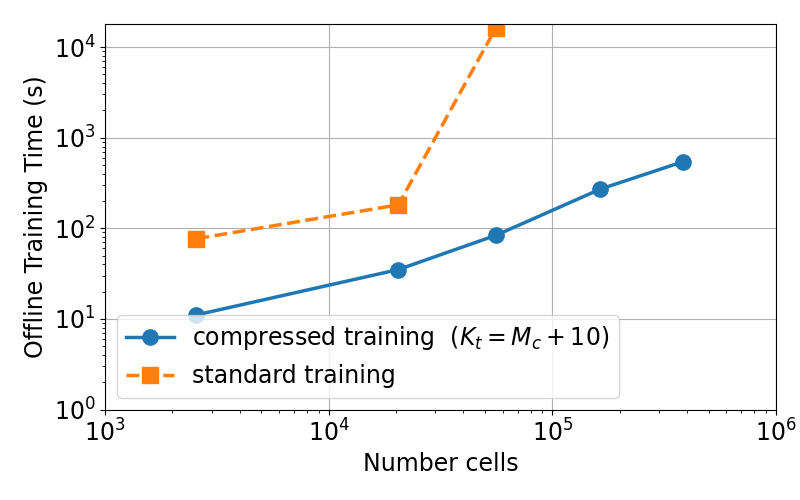}
        \caption{$M_c = 90$}
    \end{subfigure}
    \caption{Reaction-diffusion equation. Offline training times with and without compression, the varying number of cells in the \acro{FOM} and $M_c$. \label{fig:offline-training}}

\end{figure}

In Fig.~\ref{fig:offline-training}, we compare the offline training times for a varying number of cells in the \acro{FOM}. For the two lowest dimensional \acro{FOM}s, we see a gain of about one order. The third-lowest with about $56{\,}025$ cells is the largest, for which the \acro{standard compression} does not run out of memory on our machine, and there we see a gain of two orders by our compression, i.e., about $10\,000$ seconds against $100$ seconds training time.

\subsection{Gas transport network} \label{subsec:gas-Network}
The second example is based on the gas distribution network model \cite{art:bls-snapBasedMorComp}, and it employs a mixed finite element discretization combined with a structure-preserving variant of \acro{POD} for the model order reduction. We provide only a brief outline of the essential model and highlight only the details of the structure-preserving approximation that directly influence the complexity reduction, since all other details and chosen parameters can be found in \cite{art:bls-snapBasedMorComp}. We refer to \cite{code:bls22-phapprox} for the underlying \acro{FOM} \acro{MATLAB} code. 

The network topology (Fig.~\ref{fig:spatialDomains}-right) is  adapted from the benchmark data set \cite[GasLib-40]{art:gaslib-2017}, where compressors have been replaced by standard nodes. It comprises 38 pipes with diameters $D^\Onepipe$ ranging from $0.4$ to $1~\mathrm{m}$ and lengths $l^\Onepipe$ adding up to to $1008~\mathrm{km}$ in total.
Each pipe is identified with an interval $[0,l^\Onepipe]$, and the spatial domain $\Omega$ is defined as their union. The density and mass flux $\rho,m: [0,T]\times \Omega \rightarrow \mathbb{R}$ satisfy the Euler-type equations
\begin{align} \label{bls-eq:abstr-a}
\partial_t \rho + \partial_\xp m &= 0, \qquad
\partial_t \frac{m}{\rho} + \partial_\xp \frac{m^2}{2\rho^2} + \partial_\xp P'(\rho) = -r(\rho,m)\,m,
\end{align}
where $P'$ denotes the derivative of a pressure potential modeling an isothermal pressure law, and the friction term is given by $r: (\rho,m) \mapsto 0.008 |m|/(2D^\Onepipe \rho^2)$. The individual pipes are interconnected through energy-conservative coupling conditions enforcing continuity of the specific stagnation enthalpy and mass conservation. The system is closed by one boundary condition per boundary node and by initial conditions.

Apart from a refined spatial step size of $\Delta_\xi = 50~\mathrm{m}$, we adopt the same parameters as in the reference. 
This choice yields $20{\,}180$ cells and a \acro{FOM} dimension of $N = 40{\,}426$. The \acro{ROM} dimension is splitted according to $\Nr = N_{r,\rho} + N_{r,m} = 29 + 40$ on \acro{ROM} states relating to $\rho$ and $m$, respectively.
We use $K = 2400$ snapshots for each of the three nonlinear terms in \eqref{bls-eq:abstr-a}, which are uniformly distributed over the training trajectory described in~\cite{art:bls-snapBasedMorComp}.
More specifically, snapshots of the following mappings are collected:
{\small
\begin{align*} 
	(\rho,m) \mapsto  \left( P'(\rho)+ \frac{m^2}{2 \rho^2} \right) \partial_\xp \tfb^j_r , \qquad 
	(\rho,m) \mapsto r(\rho,m) m \, \tfb^j_r  , \qquad  
	(\rho,m) \mapsto 	\frac{m}{\rho} \tfb^j_r,
\end{align*}
}for $\tfb^j_r$ with $j = 1,\ldots, N_{r,m}$, which are piecewise linear \acro{ROM} ansatz functions relating to $m$. Since the third nonlinearity appears as a time derivative, we approximate it using finite differences. Our structured preprocessing method is applied to the snapshots of the first mapping, as well as to the union of the snapshots of the second and third mappings, where the latter is handled according to the simplified approach in \eqref{eq:factori-simple-egenarl}.

\paragraph{Performance of standard vs.\,compressed training}
For this example, the use of the \acro{compressed training} yields a speedup of roughly a factor of four; for example, training with $M_c=120$ cubature weights requires about $35$ seconds with compression compared to approximately $140$ seconds without, see Fig.~\ref{fig:gas:times-err-bound}-left. It can also be observed that \acro{compressed training} times depend less on $M_c$, since the compression itself accounts for most of the computational effort.

The additional error introduced by compression is negligibly small, as can be seen by comparing the difference $\fopt - \thi{\fopt}$ with the residual of the training $\fopt$ itself in Fig.~\ref{fig:gas:times-err-bound}-right. Moreover, the figure shows the strict a posteriori bound on $\fopt$. This bound is not only computable in practice but also effective, as it results in an overestimation of only about one order of magnitude.

\begin{figure}[htb]
    \centering
\ifmakepreprint    
    \begin{minipage}[t]{0.45\textwidth}
        \centering
        \includegraphics[width=0.82\textwidth]{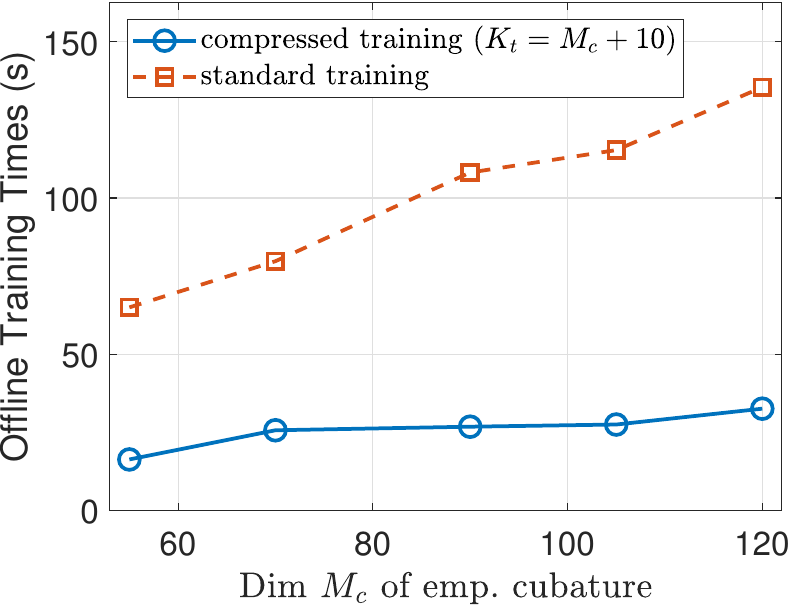}
    \end{minipage}
    \hfill
    \begin{minipage}[t]{0.45\textwidth}
        \centering
        \includegraphics[width=0.82\textwidth]{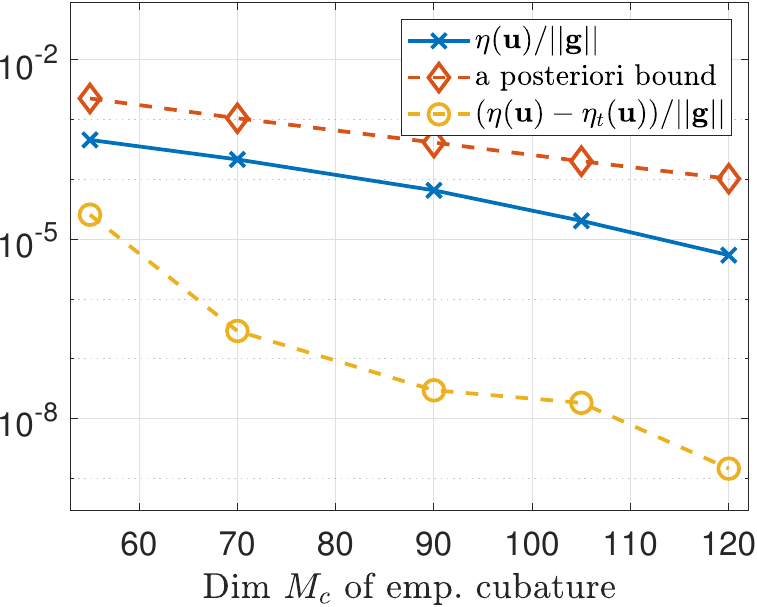}
    \end{minipage}
\else
    \begin{minipage}[t]{0.45\textwidth}
        \centering
        \includegraphics[width=0.82\textwidth]{figs/gasNet/ss-tcaseNgl38oscFF-traiOffArt-fi2_offT.eps}
    \end{minipage}
    \hfill
    \begin{minipage}[t]{0.45\textwidth}
        \centering
        \includegraphics[width=0.82\textwidth]{figs/gasNet/ss-tcaseNgl38oscFF-traiOffArt-fi2_bound.eps}
    \end{minipage}
\fi
\caption{Gas network. Comparison of training performance for varying $M_c$, with and without compression (using $\thi{K} = M_c+10$). Left: offline training times; right: training residuals, the difference introduced by compression, and the corresponding a posteriori bound from \Cref{the:CompressedOptFun} (scaled by $\|\bv{g}\|$, where $\bv{g}$ denotes the right-hand side of the training problem).}
    \label{fig:gas:times-err-bound}
\end{figure}



\section*{Conclusion and outlook}
In this paper, we propose a preprocessing technique for empirical quadrature and other project-then-approximate complexity-reduction methods. The approach leverages structured data compression by exploiting the inherent structure of the training problems, thereby reducing memory requirements and offline training times by roughly an order of magnitude. This enables the application of such methods to larger-scale problems. The efficiency gains are demonstrated through numerical experiments, including tests that integrate our approach into the lightweight finite element library \acro{scikit-fem}. Our focus is on model reduction for standard finite element discretizations.

Potential directions for future work include extending the approach to other and more general discretization settings and exploring algorithmic aspects such as efficient on-the-fly updates of the complexity reduction. Furthermore, it would be interesting to assess the performance of our method in highly parallelized environments, which are essential for very large-scale problems.


\section*{Acknowledgements}
During the preparation of this work, the author used Microsoft Copilot available at \url{https://m365.cloud.microsoft/chat} to refine the language in certain parts of the manuscript. After using this tool, the author edited the content as needed and takes full responsibility for the content.

\section*{Availability of data}
The data that support the findings of this study are available from the author upon reasonable request. 

\bibliographystyle{abbrv}
\bibliography{ComRedBil25}

\end{document}